\providecommand{\U}[1]{\protect\rule{.1in}{.1in}}
\newtheorem{theorem}{Theorem}
\newtheorem{conjecture}[theorem]{Conjecture}
\newtheorem{corollary}[theorem]{Corollary}
\newtheorem{lemma}[theorem]{Lemma}
\newtheorem{problem}[theorem]{Problem}
\newtheorem{proposition}[theorem]{Proposition}
\newenvironment{proof}[1][Proof]{\noindent\textbf{#1.} }{\ \hfill \rule{0.5em}{0.5em}\bigskip}
\begin{document}

\title{Bounds on metric dimensions of graphs with edge disjoint cycles}
\author{Jelena Sedlar$^{1}$,\\Riste \v Skrekovski$^{2,3}$ \\[0.3cm] {\small $^{1}$ \textit{University of Split, Faculty of civil
engineering, architecture and geodesy, Croatia}}\\[0.1cm] {\small $^{2}$ \textit{University of Ljubljana, FMF, 1000 Ljubljana,
Slovenia }}\\[0.1cm] {\small $^{3}$ \textit{Faculty of Information Studies, 8000 Novo
Mesto, Slovenia }}\\[0.1cm] }
\maketitle

\begin{abstract}
In a graph $G$, cardinality of the smallest ordered set of vertices that
distinguishes every element of $V(G)$ is the (vertex) metric dimension of $G$.
Similarly, the cardinality of such a set is the edge metric dimension of $G$,
if it distinguishes $E(G)$. In this paper these invariants are considered
first for unicyclic graphs, and it is shown that the vertex and edge metric
dimensions obtain values from two particular consecutive integers, which can
be determined from the structure of the graph. In particular, as a
consequence, we obtain that these two invariants can differ for at most one
for a same unicyclic graph. Next we extend the results to graphs with edge
disjoint cycles showing that the two invariants can differ at most by $c$,
where $c$ is the number of cycles in such a graph. We conclude the paper with
a conjecture that generalizes the previously mentioned consequences to graphs
with prescribed cyclomatic number $c$ by claiming that the difference of the
invariant is still bounded by $c$.

\end{abstract}



\section{Introduction}

Here we consider only simple and connected graphs. In a graph $G$, we denote
by $d_{G}(u,v)$ (or simply $d(u,v)$ if no confusion arises) the distance
between two vertices $u,v\in V(G)$. Now, if $d_{G}(u,s)\ne d_{G}(v,s)$, for
some vertices $s,u,v$ of $G$, then we say that $s$ \emph{distinguishes} (or
\emph{resolves}) $u$ and $v$. If any two vertices $u$ and $v$ are
distinguished by at least one vertex of a subset $S\subseteq V(G)$, then we
say that $S$ is a \emph{metric generator} for $G$. The cardinality of the
smallest metric generator is called the \emph{metric dimension} of $G$, and it
is denoted by $\dim(G)$. This notion for graphs was independently introduced
by \cite{Harary1976} and \cite{Slater1975}, under the names resolving sets and
locating sets, respectively. Even before this notion was introduced for the
realm of metric spaces \cite{Blumenthal1953}. In the paper, as we deal wih
several types of metric dimension, in order to emphasize that we deal in
certain situation with the usual metric dimension, we use the word "vertex" as
a prefix, and say a vertex metric generator and a vertex metric dimension.

The concept of metric dimension was recently extended from resolving vertices
to resolving edges of a graph by Kelenc, Tratnik and Yero~\cite{Kel}.
Similarly as above, a vertex $s\in V(G)$ \emph{distinguishes} two edges
$e,f\in E(G)$ if $d_{G}(s,e)\neq d_{G}(s,f)$, where $d_{G}(e,s)=d_{G}%
(uv,s)=\min\{d(u,s),d(v,s)\}$. A set of vertices $S\subseteq V(G)$ is an
\emph{edge metric generator} for $G$, if any two edges of $G$ are
distinguished by a vertex of $S$. The cardinality of the smallest edge metric
generator is called the \emph{edge metric dimension} of $G$, and it is denoted
by $\mathrm{edim}(G)$. In~\cite{Kel} it was shown that determining the edge
metric dimension of a graph is NP-hard. Also for trees, grid graphs, wheels
and some other graph classes are given bounds and closed formulas. In
particular, families of graphs for which $\dim(G)<\mathrm{edim}(G)$, or
$\dim(G)=\mathrm{edim}(G)$, or $\dim(G)>\mathrm{edim}(G)$ were presented.

Edge metric dimension immediately attracted big attention. In
\cite{Peterin2020}, Peterin and Yero were considering the edge metric
dimension of corona, join and lexicographic products of graphs. In~\cite{Zhu}
was considered the maximum possible value of edge metric dimension amongst
graphs of prescribed order. And, Zubrilina~\cite{Zubrilina} showed that it is
not posisble to bound the metric dimension of a graph $G$ by some function of
the edge metric dimension of $G$.

For a wider and systematic introduction of the topic of metric dimension, we
recommend the PhD thesis of Kelenc~\cite{KelPhD}. All three
works~\cite{KelPhD,Kel,Kelm} propose various open problems and research
directions for possible further work.

Throughout the paper we will use the following notation. The only cycle in the
unicyclic graph $G$ that is under our consideration is denoted by
$C=v_{0}v_{1}\cdots v_{g-1}$, where $g$ is the length of $C$ (i.e.
$g=\left\vert V(C)\right\vert $). The connected component of $G-E(C)$
containing vertex $v_{i}$ is denoted by $T_{v_{i}}$.

A \emph{thread} in a graph $G$ is a path $u_{1}u_{2}\cdots u_{k}$ in which all
vertices are of degree $2$ except for $u_{k}$ which is of degree $1$ and
$u_{1}$ is a neighbour of a vertex $v\in V(G)$ with $\deg(v)\geq3$. Note that
vertices of the only cycle in a unicyclic can have acyclic structures attached
to them by an edge. When such acyclic structure does not contain a vertex of
degree $\geq3$ then it is a thread, otherwise we call such structure a\emph{
branch} (for illustration see Figure \ref{Figure15}). When a branch is
attached to a vertex of the cycle, then there is certainly a pair of vertices
and a pair of edges in the branch which are on the same distance from the
cycle $C$ which, therefore, cannot be distinguished by a vertex from outside
the branch. Note that the same branching phenomenon occurs when there are two
(or more) threads attached to the same vertex of the cycle and no branches.
This is of interest to us, so we introduce the following definition. For a
vertex $v$ from a unicyclic graph $G$ we say that it is a \emph{branching}
vertex if $v\not \in V(C)$ and $\deg(v)\geq3$ or if $v\in V(C)$ and
$\deg(v)\geq4$. We say that a vertex $v_{i}\in V(C)$ is \emph{branch-active}
if $T_{v_{i}}$ contains a branching vertex. Note that a vertex $v_{i}$ from
the cycle $C$ is branch-active if there is a branch hanging at $v_{i}$ but
also if there is more than one thread attached to it. Denote by $b(C)$ the
number of all branch-active vertices on $C$. As the cycle $C$ is the only
cycle in a unicyclic graph, we can use notation $b(G)$ instead of $b(C).$

\begin{figure}[h]
\begin{center}
\includegraphics[scale=1.0]{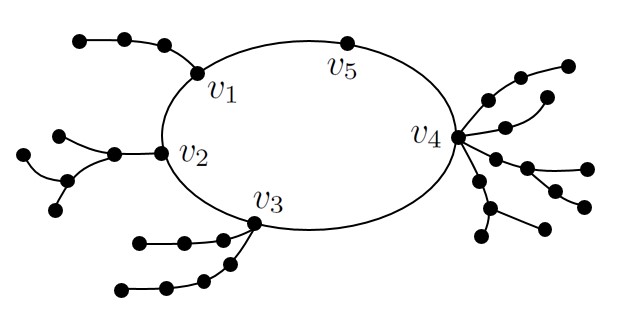}
\end{center}
\par
\caption{An illustration of threads and branches: $v_{1}$ has a thread
attached and $v_{2}$ has a branch attached. Vertices of the cycle can have
more than one acyclic structure attached to them ($v_{3}$ has two threads
attached and $v_{4}$ has two threads and two branches attached) or none
(vertex $v_{5}$). Branch-active vertices are $v_{2}$, $v_{3}$ and $v_{4}$
(even though $v_{3}$ does not have a branch attached). Finally, $T_{v_{1}}$
and $T_{v_{2}}$ consist of a thread and a branch attached to a respective
vertices, but $T_{v_{3}}$ consists of both threads attached to $v_{3}$ and the
same holds for $T_{v_{4}}$ - it consists of all threads and branches attached
to $v_{4}.$ Finally, $T_{v_{5}}$ is trivial, it consists only from the vertex
$v_{5}.$}%
\label{Figure15}%
\end{figure}

We say that a subgraph $H$ of a graph $G$ is an \emph{isometric} subgraph, if
for any two vertices $u,v\in V(H)$ it holds that $d_{H}(u,v)=d_{G}(u,v)$. The
following notation for paths is used. Suppose that $P$ is a path and $u,v\in
V(P),$ then by $P[u,v]$ we denote the subpath of $P$ connecting vertices $u$
and $v,$ while by $P(u,v)$ we denote $P[u,v]-\{u,v\}$. Notions $P[u,v)$ and
$P(u,v]$ are also used and they denote the subpaths where only one of the
end-vertices of $P[u,v]$ is excluded. For any edge $e\in E(G)$, let $G/e$ be
the graph obtained from $G$ by contracting $e$.

\section{Branch-resolving sets}

In the paper, we will establish lower and upper bounds on metric dimensions
for unicyclic graphs. As for considering the lower bound, we define that a set
$S\subseteq V(G)$ of a graph $G$ is \emph{branch-resolving} if for every $v\in
V(G)$ of degree at least $3$, the set $S$ contains a vertex from all threads
starting at vertex $v$ except possibly from one such thread. In this short
section, we provide several properties of branch-resolving sets. Notice that
one can always choose a branch-resolving set comprised of leaves.

Let us denote by $\ell(v)$ the number of all threads attached to a vertex $v$
of $G$, and let
\[
L(G)=\sum_{v\in V(G),\ell(v) >1}(\ell(v)-1).
\]
Note that for every branch-resolving set $S$ we have $\left\vert S\right\vert
\geq L(G)$ with equality holding for branch-resolving sets of minimum
cardinality. Regarding the trees, the following nice result is well known,
see~\cite{Kel, Khu}.

\begin{proposition}
For every tree $T$ that is not a path, it holds
\[
\dim(T) =\mathrm{edim}(T) =L(T).
\]

\end{proposition}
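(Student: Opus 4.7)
I would prove the two directions separately, using the observation (already noted in the excerpt) that a minimum branch-resolving set of size $L(T)$ can be chosen to consist entirely of leaves, one at the end of each selected thread.

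For the lower bound, I would apply a local twin argument at each vertex $v \in V(T)$ with $\ell(v) \geq 2$. Fixing two threads $P_i, P_j$ attached to $v$, let $u_i \in P_i$ and $u_j \in P_j$ be the neighbours of $v$; any vertex outside $P_i \cup P_j$ reaches both $u_i$ and $u_j$ only through $v$, so the two distances are equal. Hence at least one of $P_i, P_j$ must contain a vertex of any vertex metric generator, and pigeonholing over all pairs of threads at $v$ forces all but at most one thread at $v$ to meet the generator, contributing $\ell(v)-1$ locally. Summing over all $v$ with $\ell(v) \geq 2$ yields $\dim(T) \geq L(T)$, and running the same argument on the two edges of $P_i, P_j$ incident to $v$ yields $\mathrm{edim}(T) \geq L(T)$.

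For the upper bound, I would take a minimum branch-resolving set $S$ of leaves and prove it simultaneously vertex-resolves and edge-resolves $T$. Given two distinct vertices $u, u'$, the strategy is to exhibit a leaf $s \in S$ such that the unique paths from $s$ to $u$ and from $s$ to $u'$ first diverge at some vertex $w$ lying on the $u$-$u'$ path; once this is secured, $d(s,u) \neq d(s,u')$ follows directly from the tree distance formula. Since $T$ is not a path, there exists a branching vertex on or near the $u$-$u'$ path, and $S$ meets all but at most one of its threads; a short case analysis shows that the failure mode in which both $u$ and $u'$ align with the single "spared" thread can always be repaired by passing to another branching vertex of $T$. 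The edge case is handled in parallel, replacing distances to vertices by the distance $d_G(s, uv) = \min\{d(s,u), d(s,v)\}$ to edges.

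\emph{Main obstacle.} The lower bound is a clean local twin argument, but the upper bound demands a careful case analysis on the position of $u$ and $u'$ (or of the two edges) relative to the branching vertices and threads of $T$. The hardest configuration is when one of the two objects lies deep inside the unique thread at a branching vertex which does not meet $S$: one must then climb to another branching vertex elsewhere in the tree to produce a distinguishing leaf, and this step is where the hypothesis that $T$ is not a path — so that a second branching vertex is available — is genuinely used.
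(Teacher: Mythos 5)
Your lower bound is correct and complete as sketched: the twin argument at each vertex $v$ with $\ell(v)\geq 2$, plus the fact that threads at distinct branching vertices are vertex-disjoint, gives $\dim(T)\geq L(T)$ and $\mathrm{edim}(T)\geq L(T)$. Note that the paper does not actually prove this proposition (it cites it as known from \cite{Kel,Khu}), but your lower bound coincides with the first paragraph of the paper's proof of Lemma~\ref{l.br1}, and the upper bound you would need is essentially the argument of Lemma~\ref{Prop_SameComponent} restricted to trees.

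The upper bound as you state it has a genuine gap at its central step. You claim that it suffices to exhibit $s\in S$ such that the paths from $s$ to $u$ and from $s$ to $u'$ ``first diverge at some vertex $w$ lying on the $u$--$u'$ path.'' This condition is satisfied by \emph{every} vertex $s$ of the tree: the divergence point is the median of $s,u,u'$, which always lies on the $u$--$u'$ path $P$. It does not imply $d(s,u)\neq d(s,u')$, since $d(s,u)=d(s,w)+d(w,u)$ and $d(s,u')=d(s,w)+d(w,u')$, so $s$ fails to distinguish $u$ and $u'$ precisely when $w$ is the midpoint of $P$ (forcing $P$ to have even length). The correct continuation is the one the paper uses in Lemma~\ref{Prop_SameComponent}: if no $s\in S$ distinguishes $u$ and $u'$, then every $s\in S$ has the fixed midpoint $w$ as its median with $u,u'$, hence all of $S$ lies in the component of $T-E(P)$ containing $w$ (after discarding the trivial case $\deg(w)=2$, where that component is $\{w\}$ and cannot contain a leaf set). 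Then the two branches of $T$ at $w$ containing $u$ and $u'$ both miss $S$; each is either a thread at $w$ or contains a branching vertex all of whose outgoing threads miss $S$, contradicting the branch-resolving property. Your identified ``failure mode'' ($u$ and $u'$ lying on the single spared thread of one branching vertex) is not the actual obstruction, and ``passing to another branching vertex'' is not what resolves it — what resolves it is that $S$ being trapped behind the midpoint violates branch-resolution locally at $w$ or inside one of the two $S$-free branches. The same localization argument, with $d(s,xy)=\min\{d(s,x),d(s,y)\}$, is what is needed for the edge version as well.
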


Let $G$ be a unicyclic graph, let $S\subseteq V(G)$ be a set of vertices in
$G$, and let $v_{i}$ be a vertex from $C$. We say that a vertex $v_{i}$ is
\emph{$S$-active}, if $T_{v_{i}}$ contains a vertex from $S$. We will mainly
be interested in $S$-active vertices with respect to a given branch-resolving
set $S$. Let $A(S)\subseteq V(C)$ be the set of vertices in $V(C)$ which are
$S$-active, also let $a(S)$ be the cardinality of the set $A(S)$.

\begin{lemma}
\label{l.br1} Let $S$ be a metric generator or an edge metric generator of a
unicyclic graph $G$. Then $S$ is a branch-resolving set with $a(S)\ge2$.
\end{lemma}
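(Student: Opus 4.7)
The plan is to dispatch the two assertions — that $S$ is branch-resolving and that $a(S)\ge 2$ — separately, each by contradiction, by exhibiting in each case an unresolved pair of vertices (when $S$ is a metric generator) and an unresolved pair of edges (when $S$ is an edge metric generator).

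For the branch-resolving claim, I would suppose toward a contradiction that some branching vertex $v$ carries two threads $t:\ v=u_0u_1\cdots u_k$ and $t':\ v=u'_0u'_1\cdots u'_{k'}$ both disjoint from $S$. Setting $r=\min\{k,k'\}$, I would argue that the vertex pair $\{u_r,u'_r\}$, respectively the edge pair $\{u_{r-1}u_r,\, u'_{r-1}u'_r\}$, cannot be resolved from outside $V(t)\cup V(t')$: every shortest path from an external vertex $w$ to either $u_r$ or $u'_r$ must enter the thread at $v$, so both distances equal $d(w,v)+r$; the analogous computation yields $d(w,\cdot)=d(w,v)+r-1$ for the two edges. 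Because $S$ contains no vertex inside either thread, no element of $S$ resolves the chosen pair, contradicting the hypothesis on $S$.

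For the bound $a(S)\ge 2$, I would first note $a(S)\ge 1$ since $S\ne\emptyset$. Assuming $a(S)=1$ and relabelling so that $S\subseteq V(T_{v_0})$, I would invoke the structural fact that each $T_{v_j}$ with $j\ne 0$ meets $C$ only at $v_j$; this forces every shortest path from $s\in V(T_{v_0})$ to any $v_i\in V(C)$ to leave $T_{v_0}$ at $v_0$ and then travel along $C$, giving $d(s,v_i)=d_{T_{v_0}}(s,v_0)+\min\{i,\,g-i\}$. Consequently $v_1$ and $v_{g-1}$, which are distinct because $g\ge 3$, are assigned equal distances by every $s\in S$, while in the edge case both edges $v_0v_1$ and $v_0v_{g-1}$ sit at distance $d_{T_{v_0}}(s,v_0)$ from any such $s$; either observation produces the required contradiction.

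The main obstacle is not conceptual but bookkeeping: one has to be careful about the boundary value $r=1$ in the first part, where the edge $u_{r-1}u_r$ is incident with $v$ itself (the same distance formula still applies), and to confirm in the second part that even for $g=3$ the vertices $v_1$ and $v_{g-1}$ really are distinct. Both checks are routine and should be stated explicitly to keep the argument airtight.
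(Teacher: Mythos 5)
Your proposal is correct and follows essentially the same route as the paper's proof: for the branch-resolving part you exhibit an equidistant pair of vertices (resp.\ edges) inside the two $S$-free threads at $v$, and for $a(S)\ge 2$ you use the two neighbours of the unique $S$-active cycle vertex (resp.\ the two incident cycle edges). The only cosmetic difference is that you pick the pair at depth $r=\min\{k,k'\}$ in the threads where the paper simply uses $x_1$ and $y_1$ at depth $1$; both choices work for the same reason.
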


\begin{proof}
If $S$ is not a branch-resolving set, there is a vertex $v\in V(G)$ of degree
at least three such that there are two threads $x_{1}x_{2}\cdots x_{k}$ and
$y_{1}y_{2}\cdots y_{l}$ attached to $v$ (vertices $x_{1}$ and $y_{1}$ are
adjacent to $v$) such that $S$ does not contain a vertex from these two
threads. Let $G_{v}$ be the connected component of $G-\{vx_{1},vy_{1}\}$ which
contains $v$. Note that $S\subseteq V(G_{v})$ and $d(x_{1},z)=d(y_{1},z)$ for
every $z\in V(G_{v})$. Therefore, $S$ does not distinguish $x_{1}$ and $y_{1}%
$, and so $S$ is not a metric generator. Also, note that $d(x_{1}%
v,z)=d(y_{1}v,z)$ for every $z\in V(G_{v}),$ which implies that $S$ does not
distinguish edges $x_{1}v$ and $y_{1}v,$ so neither is $S$ an edge metric
generator. In both cases we obtain a contradicton.

If $a(S)=0$ then $S=\phi$ and the claim is obvious. If $a(S)=1$, then let
$v_{i}$ be the vertex from cycle $C$ in $G$ that is $S$-active. Note that for
the connected component $T_{v_{i}}$ it holds that $S\subseteq V(T_{v_{i}})$.
Let $v_{i-1}$ and $v_{i+1}$ be two neighbors of $v_{i}$ on $C$. Then,
$v_{i-1}$ and $v_{i+1}$ are not distinguished by $S$, so $S$ is not a metric
generator. Also, edges $v_{i-1}v_{i}$ and $v_{i}v_{i+1}$ are not distinguished
by $S$, so $S$ is not an edge metric generator either.
\end{proof}

Above result tells that every generating set is branch-resolving, the opposite
does not hold but still by a branch-resolving set we can distinguish "local"
pairs of vertices and edges as it is shown in the next lemma.

\begin{lemma}
\label{Prop_SameComponent} Let $G$ be a unicyclic graph and $S\subseteq V(G)$
a branch-resolving set with $a(S)\geq2$. Then, any two vertices (also any two
edges) from a same connected component of $G-E(C)$ are distinguished by $S$.
\end{lemma}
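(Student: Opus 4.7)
The plan is to argue by comparing the distance to $v_i$ inside $T_{v_i}$. A preparatory observation, used throughout, is that each component $T_{v_j}$ is attached to $G\setminus T_{v_j}$ only through $v_j$, so for $s\in V(T_{v_j})$ and $u\in V(T_{v_i})$ with $j\ne i$, every shortest path factors as
\[
d(s,u)\;=\;d_{T_{v_j}}(s,v_j)+d_{C}(v_j,v_i)+d_{T_{v_i}}(v_i,u),
\]
with the analogous identity for an edge via the endpoint closer to $v_i$. Consequently, if the two items ($u,w$ or $e_1,e_2$) lie at unequal distance from $v_i$ inside $T_{v_i}$, then any $s\in S$ located in some $T_{v_j}$ with $j\ne i$ distinguishes them; since $a(S)\ge 2$ such an $s$ exists.

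The substantive situation is when the two items are at equal distance from $v_i$. For vertices $u,w$ I would let $v$ be the vertex of $T_{v_i}$ where the paths from $v_i$ to $u$ and from $v_i$ to $w$ first coincide, with $u',w'$ its distinct neighbors on the $u$-side and $w$-side. For edges $e_1,e_2$ at equal depth I would use the same construction when their endpoints closer to $v_i$ are distinct; if instead they share a common closer endpoint $x$, I would take $v:=x$ and let $u',w'$ be the two distinct farther endpoints. In every case $\deg_G(v)\ge 3$ (and $\ge 4$ if $v=v_i$), so branch-resolving applies at $v$. Let $T_v^u$ and $T_v^w$ be the subtrees of $T_{v_i}$ obtained by deleting $vu'$ and $vw'$ and keeping the sides containing $u$ and $w$, respectively; the key sub-claim is that $S$ meets $V(T_v^u)\cup V(T_v^w)$.

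To establish the sub-claim I would examine each subtree separately. If $T_v^u$ has no vertex of $G$-degree at least three, then all of its vertices have $\deg_G\le 2$, which forces $T_v^u$ to be a path from $u'$ to a leaf of $G$; this is exactly a thread at $v$. Otherwise, take $x\in V(T_v^u)$ with $\deg_G(x)\ge 3$ maximizing $d(v,x)$: by maximality each neighbor of $x$ on the side of $T_v^u$ away from $v$ starts a thread at $x$ in $G$, and there are at least two such neighbors, so branch-resolving at $x$ places a vertex of $S$ in one of these threads and hence in $T_v^u$. The same analysis applies to $T_v^w$. Thus, if $S$ missed both subtrees, both would be threads at $v$, contradicting branch-resolving at $v$.

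Finally, I would pick $s\in S\cap V(T_v^u)$ and verify the distinguishing inequality by a direct calculation. Every path from $s$ to $w$ must exit $T_v^u$ through $vu'$, giving $d(s,w)=d_{T_v^u}(s,u')+1+d(v,w)$, whereas $d(s,u)$ stays inside $T_v^u$ and is at most $d_{T_v^u}(s,u')+d(v,u)-1$ by the triangle inequality at $u'$; together with $d(v,u)=d(v,w)$ this forces $d(s,w)-d(s,u)\ge 2$. The parallel computation for edges, using $d(s,e)=\min\{d(s,x),d(s,y)\}$ for $e=xy$, yields the same strict separation. I expect the main obstacle to be the sub-claim that $S$ hits a sibling subtree; once the deepest-branching-vertex reduction is in place, the rest is a routine distance calculation in a tree attached to a single cycle.
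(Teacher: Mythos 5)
Your proposal is correct and follows essentially the same route as the paper: reduce to the case where the two items are equidistant from the cycle vertex of their component, locate the branching vertex $v$ in the middle of the connecting path, and use the branch-resolving property (via a deepest branching vertex when a side is not a thread) to find a vertex of $S$ in one of the two sibling subtrees that then distinguishes the pair. The only difference is presentational — you argue directly that $S$ must meet one of the subtrees and verify the distance inequality explicitly, where the paper assumes $S$ avoids both and derives a contradiction.
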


\begin{proof}
Since $a(S)\geq2,$ there are at least two vertices on the cycle $C$ which are
$S$-active, say $v_{i}$ and $v_{j}$. Let $x$ and $x^{\prime}$ be two vertices
from a same component of $G-E(C)$, say $x$ and $x^{\prime}$ both belong to
$T_{v_{k}}$. Without loss of generality, we may assume that $k\not =i$. Then
$x$ and $x^{\prime}$ are distinguished by a vertex $s\in S\cap V(T_{v_{i}})$
in all cases except when $d(x,v_{k})=d(x^{\prime},v_{k})$. Therefore, suppose
that $d(x,v_{k})=d(x^{\prime},v_{k})$ and let $P$ be the only path in $G$
connecting vertices $x$ and $x^{\prime}$. Denote by $v$ the middle vertex of
the path $P$ which must exist because of our assumption that $d(x,v_{k}%
)=d(x^{\prime},v_{k})$, and so $P$ is of even length. Let $G_{v}$ be the
connected component of $G-E(P)$ containing vertex $v$. Note that $x$ and
$x^{\prime}$ are distinguished by all vertices outside $G_{v}$. So, outside
$G_{v}$ there cannot be any vertex from $S$.

Let $G_{x}$ and $G_{x^{\prime}}$ be the connected components of $G-v$
containing vertices $x$ and $x^{\prime}$ respectively. If both $G_{x}$ and
$G_{x^{\prime}}$ are threads, then vertex $v$ has two threads attached to it
which do not contain a vertex from $S$ which is a contradiction. Therefore,
$G_{x}$ or $G_{x^{\prime}}$ is a branch i.e. it contains at least one vertex
of degree $\geq3$, say $G_{x}$ is a branch. Therefore, $G_{x}$ must contain at
least two threads attached to the same vertex which do not contain a vertex
from $S$ which is a contradiction.

Suppose now that $e=xy$ and $e^{\prime}=x^{\prime}y^{\prime}$ are two edges
from the same $T_{v_{k}}$. We may assume that $x$ is closer to $v_{k}$ than
$y$, and similarly $x^{\prime}$ is closer to $v_{k}$ than $y^{\prime}$. We may
also assume $d(x,v_{k})=d(x^{\prime},v_{k})$, otherwise $e$ and $e^{\prime}$
will be distinguished by any vertex $s\in S\backslash V(T_{v_{k}})$. Now, let
$P$ be the shortest path connecting vertices $x$ and $x^{\prime}$. Note that
the fact that $d(x,v_{k})=d(x^{\prime},v_{k})$ implies $P$ is of even length
which means there is a vertex $v$ sitting in the middle of $P$ which is of
degree at least $3$. Note that $e$ and $e^{\prime}$ are distinguished by $S$
unless $S\subseteq V(G_{v})$ where $G_{v}$ is the connected component of
$G-E(P)$ containing vertex $v.$ Suppose therefore that $S\subseteq V(G_{v})$.
Observe that connected components $G_{y}$ and $G_{y^{\prime}}$ of $G-v$ that
contains $y$ and $y^{\prime}$, respectively, are different. If both $G_{y}$
and $G_{y^{\prime}}$ are threads, then there are two threads attached to $v$
which do not contain a vertex from $S$ which is a contradiction. Otherwise,
$G_{y}$ or $G_{y^{\prime}}$ is a branch, then it must contain a vertex from
$S$ and consequently $S\not \subseteq V(G_{v})$, a contradiction.
\end{proof}


\section{Geodesic triples}

Branch-resolving sets enable us to distinguish vertices and edges from same
$T_{v_{i}}$'s. Now, we introduce "small" sets which enable us to distinguish
vertices and edges from distinct $T_{v_{i}}$'s. Let $v_{i}$, $v_{j}$, and
$v_{k}$ be three vertices belonging to the only cycle $C$ in a unicyclic graph
$G$. We say that $v_{i}$, $v_{j}$, and $v_{k}$ form a geodesic triple of
vertices on $C$, if
\[
d(v_{i},v_{j})+d(v_{j},v_{k})+d(v_{i},v_{k})=|V(C)|.
\]
Observe that for any two vertices of $C$, we can easily choose a thrid one
such that they form a geodesic triple. Let us now proceed to prove that
geodesic triple of vertices distinguishes all pairs of vertices and all pairs
of edges which are not in the same connected component of $G-E(C)$. Notice
that the edges of $C$ are also considered here.

\begin{lemma}
\label{l.gt.v} Let $G$ be a unicyclic graph and let $S$ be a geodesic triple
of vertices from $C$. Then, $S$ distinguishes any two vertices that belong to
two distinct components of $G-E(C)$.
\end{lemma}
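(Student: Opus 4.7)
The plan is to reduce the claim to a one-dimensional statement about the cycle. Let $v_a, v_b \in V(C)$ be the vertices with $x \in T_{v_a}$ and $y \in T_{v_b}$; since $x$ and $y$ lie in distinct components of $G - E(C)$ we have $v_a \neq v_b$. Because $G$ is unicyclic and $T_{v_a}$ is attached to $C$ only at $v_a$, the unique $x$-to-$v_m$ path passes through $v_a$, so $d(x, v_m) = d(x, v_a) + d(v_a, v_m)$, and analogously for $y$ through $v_b$. Subtracting,
\[
d(x, v_m) - d(y, v_m) = \bigl[d(x,v_a) - d(y,v_b)\bigr] + h(v_m), \qquad h(v_m) := d(v_a, v_m) - d(v_b, v_m).
\]
The bracketed constant is independent of $m$, so $S = \{v_i, v_j, v_k\}$ distinguishes $x$ and $y$ if and only if $h$ is not constant on $S$.

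Set $t := d(v_a, v_b)$, so $1 \leq t \leq g/2$, and split $C$ into a short arc of length $t$ and a long arc of length $g - t$. Moving one step along $C$ changes each of $d(v_a, \cdot)$ and $d(v_b, \cdot)$ by exactly $\pm 1$, so $h$ changes by $0$ or $\pm 2$. A routine case check (using that the distance to a fixed cycle vertex first grows and then shrinks as one walks around $C$) shows the following: on the short arc, $h$ increases by $2$ per step, from $-t$ at $v_a$ to $+t$ at $v_b$; on the long arc walked from $v_b$ back to $v_a$, $h$ is constant at $+t$ on an initial sub-arc of length $\lfloor g/2 \rfloor - t$, then decreases with slope $-2$ down to $-t$, and finally stays at $-t$ on a terminal sub-arc of length $\lfloor g/2 \rfloor - t$. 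Consequently, every value in $(-t, t)$ has at most two preimages in $V(C)$, and each extreme value $\pm t$ is attained exactly on an arc of length $\lfloor g/2 \rfloor - t$, which is strictly less than $g/2$ because $t \geq 1$.

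Now suppose for contradiction that $h \equiv c$ on $\{v_i, v_j, v_k\}$. If $|c| < t$, then $c$ has at most two preimages, a contradiction. Otherwise $c = \pm t$, and the three points lie on a common arc of length $L := \lfloor g/2 \rfloor - t < g/2$. But three points on an arc of length $L < g/2$ cyclically partition $C$ into pieces of lengths $\alpha, \beta, \gamma$ with $\alpha + \beta \leq L < g/2 < g - L \leq \gamma$, which gives $d(v_i, v_j) + d(v_j, v_k) + d(v_i, v_k) = 2(\alpha + \beta) \leq 2L < g$, contradicting the geodesic triple condition. Hence $h$ is non-constant on $S$, as required. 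The main obstacle is the plateau-length computation in the middle paragraph; once the bound $\lfloor g/2 \rfloor - t < g/2$ is in hand, the concluding contradiction is elementary cycle geometry.
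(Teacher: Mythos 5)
Your proof is correct, and it takes a genuinely different route from the paper. The paper argues combinatorially: it fixes the two $x$--$x'$ paths $P_1,P_2$, introduces the antipodal vertices $v_i^\ast,v_j^\ast$, and through three claims confines any non-distinguishing set $S$ to sub-arcs too short to contain a geodesic triple. You instead reduce everything to the level sets of the single function $h(v_m)=d(v_a,v_m)-d(v_b,v_m)$ on $C$: every fibre of $h$ is either of size at most two or an arc of length $\lfloor g/2\rfloor-t<g/2$, and an arc that short cannot carry a geodesic triple since the three pairwise distances would sum to at most $2(\lfloor g/2\rfloor -t)<g$. This is cleaner and proves something slightly stronger (no geodesic triple lies in any level set of $h$, whatever the constant), whereas the paper's case analysis is closer in spirit to the arguments it reuses later for edges. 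Two small imprecisions worth fixing: the reduction is only an implication, not an equivalence --- $S$ fails to distinguish $x$ and $y$ exactly when $h\equiv d(y,v_b)-d(x,v_a)$ on $S$, so constancy of $h$ at some other value would still distinguish them; and for odd $g$ the descent on the long arc has increments $-1$ at its two ends rather than a uniform slope $-2$. Neither affects the argument, since all you use is that $h$ is strictly monotone on the short arc and on the interior of the long arc, which gives the "at most two preimages" bound for values in $(-t,t)$.
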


\begin{proof}
Suppose to the contrary, i.e. $S$ does not distinguish vertices $x$ and
$x^{\prime}$ that belong to distinct components of $G-E(C)$. We may assume
that $x$ belongs to $T_{v_{i}}$ and $x^{\prime}$ belongs to $T_{v_{j}}$ with
$i\ne j$. Let $v_{j}^{\ast}$ be the antipodal of $v_{j}$ such that in case
when $g$ is odd, it has two antipodals, we choose the one closer to $v_{i}$.
And similarly, let $v_{i}^{\ast}$ be the antipodal of $v_{i}$ that is closer
to $v_{j}$. See Figure~\ref{f.l5}, where it is assumed that $C$ is odd,
$v_{i}$ (resp. $v_{j}$) is connected to its two antipodal vertices, the one
that is closer to $v_{j}$ (resp. $v_{i}$) is denoted by $v_{i}^{*}$ (resp.
$v_{j}^{*}$).

\begin{figure}[h]
\begin{center}
\includegraphics[scale=1.0]{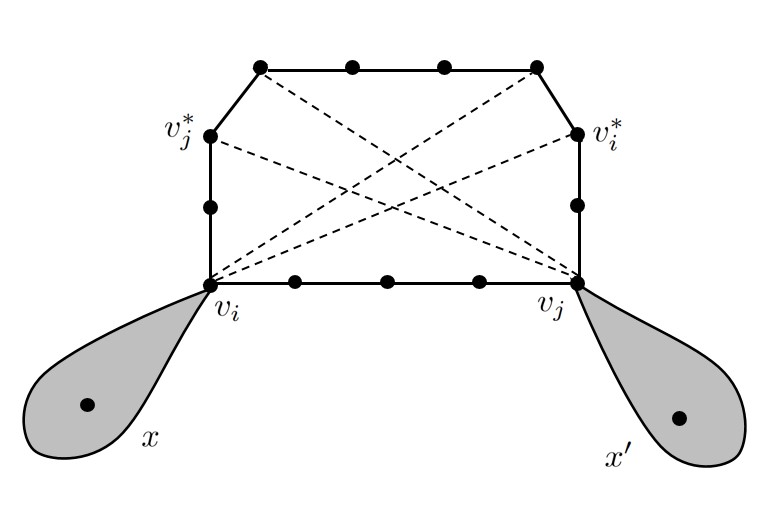}
\end{center}
\par
\caption{An illustration of the situation in Lemma~\ref{l.gt.v} with $C$ being
of odd length: vertex $v_{i}$ has two antipodal vertices where we choose for
$v_{i}^{\ast}$ the one of the two closer to $v_{j}$ and the same holds for
$v_{j}$.}%
\label{f.l5}%
\end{figure}

Let $P_{1}$ and $P_{2}$ be the two paths in $G$ connecting $x$ and $x^{\prime
}$. If $P_{1}$ and $P_{2}$ are of different length, we assume that $P_{1}$ is
shorter than $P_{2}$. Thus, $v_{i}^{*}$ and $v_{j}^{*}$ belong to $P_{2}$. In
what follows, we show several claims regarding $P_{1}, P_{2}$, and $S$ in
order to complete the proof.

\medskip\noindent\textbf{Claim 1.} \emph{Each of $P_{1}[v_{i},v_{j}]$ and
$P_{2}(v_{i}^{\ast},v_{j}^{\ast})$ contains at most one vertex from $S$.}

\medskip\noindent Regarding $P_{1}[v_{i},v_{j}]$, this is due to the fact
$P_{1}$ is a shortest path between $x$ and $x^{\prime}$ which implies it is an
isometric path. Now, the claim follows from the fact that an isometric path
cannot contain two distinct vertices on equal distance to one of its end-vertices.

Suppose now that $P_{2}(v_{i}^{\ast},v_{j}^{\ast})$ contains at least two
vertices from $S$, say $s_{1}$ and $s_{2}$. The fact that $s_{\ell}$ ($\ell
\in\{1,2\}$) does not distinguish $x$ and $x^{\prime}$ implies
\[
d(s_{\ell},v_{i})+d(v_{i},x)=d(s_{\ell},v_{j})+d(v_{j},x^{\prime}),
\]
or equivalently
\[
d(s_{\ell},v_{i})-d(s_{\ell},v_{j})=d(v_{j},x^{\prime})-d(v_{i},x).
\]
As the right side of the equality is the same for both $\ell=1$ and $\ell=2$,
we promptly derive that
\[
d(s_{1},v_{i})-d(s_{1},v_{j})=d(s_{2},v_{i})-d(s_{2},v_{j}).
\]
But this is not possible due to the fact that $P_{2}[v_{j},v_{j}^{\ast}]$ and
$P_{2}[v_{i},v_{i}^{\ast}]$ are isometric paths, and all these four distances
are realized by subpaths of $P_{2}$. This establishes the claim.

\medskip\noindent\textbf{Claim 2.} \emph{If $P_{2}[v_{i},v_{j}^{\ast}]$
contains a vertex from $S$ then $P_{2}(v_{j}^{\ast},v_{j}]$ contains no vertex
from $S$.} Suppose the claim is false and $s_{1}$ is in $P_{2}[v_{i}%
,v_{j}^{\ast}]$ and $s_{2}$ is in $P(v_{j}^{\ast},v_{j}]$. Since $s_{1}$ does
not distinguish $x$ and $x^{\prime}$, it follows that $d(v_{i},x)=d(v_{i}%
,v_{j})+d(v_{j},x^{\prime})$. But now%

\[%
\begin{array}
[c]{rcl}%
d(s_{2},x) & = & d(s_{2},v_{i})+d(v_{i},x)\\
& = & d(s_{2},v_{i})+d(v_{i},v_{j})+d(v_{j},x^{\prime})\\
& > & d(s_{2},v_{j})+d(v_{j},x^{\prime})\\
& = & d(s_{2},x^{\prime}).
\end{array}
\]
This implies that $s_{2}$ (and so $S$) disitinguishes $x$ and $x^{\prime}$,
which is a contradiction.

\medskip\noindent\textbf{Claim 3.} \emph{If $P_{2}[v_{j},v_{i}^{*}]$ contains
a vertex from $S$ then $P_{2}(v_{i}^{*},v_{i}]$ contains no vertex from $S$.}
The proof is similar as in Claim 3.

\medskip Now, we apply the above claims in order to conclude the proof. Notice
that if $P_{2}[v_{i},v_{j}^{\ast}]$ contains a vertex from $S$ then due to
Claims 2 and 3, all vertices of $S$ are contained in $P[v_{j},v_{j}^{\ast}]$.
Since $S$ contains a geodesic triple this is possible only if $g$ is even and
$v_{j}$ and $v_{j}^{\ast}$ belong to $S$. The fact that $v_{j}^{\ast}$ is
element of $S$ implies $d(v_{i},x)=d(v_{i},v_{j})+d(v_{j},x^{\prime})$ but
then $v_{j}\in S$ distinguishes $x$ and $x^{\prime}$, which is a
contradiction. We argue similarly if $P_{2}[v_{i}^{\ast},v_{j}]$ contains a
vertex from $S$.

On the other hand if there is no vertex of $S$ in $P_{2}[v_{i},v_{j}^{*}]$ and
$P_{2}[v_{i}^{*},v_{j}]$, then by Claim 1, $S$ can contain at most one vertex
in $P_{2}(v_{i}^{*},v_{j}^{*})$ and in $P_{1}(v_{i},v_{j})$ which is a
contradiction with the assumption that $S$ is of size at least three.
\end{proof}

Now we show the edge version of the previous lemma.

\begin{lemma}
\label{l.gt.e} Let $G$ be a unicyclic graph and let $S$ be a geodesic triple
of vertices from $C$. Then, $S$ distinguishes any two edges that does not
belong to a same component of $G-E(C)$.
\end{lemma}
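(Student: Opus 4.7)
The plan is to closely follow the proof of Lemma~\ref{l.gt.v}, reducing the edge problem to the vertex one whenever the involved edges lie inside subtrees $T_{v_i}$. The underlying observation is that for an edge $e\in E(T_{v_i})$, writing $x$ for the endpoint of $e$ closer to $v_i$, every $s\in V(C)$ satisfies $d(s,e)=d(s,v_i)+d(v_i,x)$; so replacing a vertex by such an edge only shifts the distance by the constant $d(v_i,x)$, and the algebraic identities in the vertex proof carry over intact.

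I would split into three cases according to the location of $e$ and $e'$. In \textbf{Case 1}, $e\in E(T_{v_i})$ and $e'\in E(T_{v_j})$ with $i\ne j$: taking $x,x'$ to be the endpoints of $e,e'$ nearest to $v_i,v_j$, the condition that $s\in V(C)$ fails to distinguish $e$ and $e'$ is precisely $d(s,v_i)-d(s,v_j)=d(v_j,x')-d(v_i,x)$, which is the very equation driving Lemma~\ref{l.gt.v}. Consequently Claims~1, 2 and 3 there transfer verbatim and force the same contradiction against $S$ being a geodesic triple. In \textbf{Case 2}, exactly one edge, say $e'=v_jv_{j+1}$, lies on $C$; then $d(s,e')=\min\{d(s,v_j),d(s,v_{j+1})\}$, and which endpoint realises the minimum depends only on which half of $C$ (relative to $e'$) the vertex $s$ lies in. By pigeonhole, at least two vertices of the geodesic triple lie on the same half, and for those two the non-distinguishing equation becomes linear in the cycle distance and reduces to Claim~1 of Lemma~\ref{l.gt.v}. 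Finally, \textbf{Case 3}, with both $e$ and $e'$ on $C$, must be argued directly on the cycle: the geodesic triple $S=\{v_a,v_b,v_c\}$ cuts $C$ into three isometric arcs, and a case analysis on which arc contains the endpoints of $e,e'$ nearest to each of $v_a,v_b,v_c$ produces a distinguishing vertex.

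The main obstacle I expect is Case~3. The $\min$ in the definition of $d(s,e)$ for a cycle edge destroys the linearity that powers Cases~1 and 2, and the reasoning has to be carried out by hand on $C$, with careful bookkeeping over the parity of $g$ and the possible non-uniqueness of antipodal vertices---exactly the subtleties that appear in the closing paragraph of Lemma~\ref{l.gt.v}. The isometric-arc decomposition induced by the geodesic triple is what saves the day, but checking every configuration of the two cycle edges is the delicate part.
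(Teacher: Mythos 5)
Your three-way case split by how many of $e,e'$ lie on $C$ is the same skeleton the paper uses, and your Case~1 is correct (there $d(s,e)=d(s,x)$ and $d(s,e')=d(s,x')$ for every $s\in V(C)$, so one can simply cite Lemma~\ref{l.gt.v} for the vertices $x$ and $x'$ rather than re-running its claims). The genuine gap is in your Case~2. The step ``at least two vertices of the triple lie on the same half of $C$ relative to $e'=v_jv_{j+1}$, and for those two the non-distinguishing equation reduces to Claim~1 of Lemma~\ref{l.gt.v}'' does not hold. Claim~1 rests on the injectivity of $s\mapsto d(s,v_i)-d(s,v_j)$ along specific isometric subpaths determined by $v_i$, $v_j$ and their antipodal vertices; a half of $C$ determined by the edge $e'$ is not such a path, and the map need not be injective on it. Concretely, take $e'=v_jv_{j+1}$ and let $e$ be a pendant edge of $T_{v_j}$ attached at $v_j$ itself, so that its closer endpoint is $x=v_j$: then $d(s,e)=d(s,v_j)=d(s,e')$ for \emph{every} vertex $s$ in the $v_j$-half, so two vertices of a geodesic triple lying in that half can both fail to distinguish $e$ and $e'$, and no contradiction arises from them; the proof must instead exhibit the third vertex, in the $v_{j+1}$-half, where $d(s,e')=d(s,v_{j+1})<d(s,v_j)=d(s,e)$. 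Nor is this only a degenerate phenomenon: on $C_8$ with $e'=v_0v_1$ and $e$ an edge of $T_{v_7}$ whose closer endpoint is at distance $1$ from $v_7$, the vertices $v_5,v_6,v_7$ all fail to distinguish $e$ and $e'$, so a whole arc inside one half can fail. The paper avoids all of this by first disposing of the subcase where the tree edge is attached at an endpoint of the cycle edge, and then contracting the cycle edge into a vertex $v_{xy}$ of the unicyclic graph $G/e$ (legitimate once $g\ge4$), noting $d_G(s,e)=d_{G/e}(s,v_{xy})$ for $s\in V(C)$ and invoking the vertex lemma in $G/e$; you need either that device or a substantially more careful direct analysis on $C$.

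Your Case~3 (both edges on $C$) is, as written, not yet an argument --- you only assert that a case analysis will produce a distinguishing vertex --- and your assessment of where the difficulty lies is inverted. This is in fact the easy case: $C-e-e'$ consists of two arcs, one joining $x$ to $x'$ and one joining $y$ to $y'$, and a vertex of an arc fails to distinguish $e$ and $e'$ only if that arc has even length and the vertex is its midpoint; hence at most two vertices of $C$ in total can fail, and the triple always contains a distinguishing vertex. No bookkeeping over the parity of $g$ or over antipodal vertices is needed here.
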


\begin{proof}
Suppose to the contrary that $S$ does not distinguish two edges that belong to
two distinct components of $G-E(C)$, say $e=xy$ and $e^{\prime}=x^{\prime
}y^{\prime}$. If $C$ is of length $3$, then $S$ contains all vertices of $C$,
and consequently promptly follows that $S$ disitnguishes $e$ and $e^{\prime}$.
So, we assume that $C$ is of length $\ge4$.

We will consider three cases regarding whether $e$, $e^{\prime}$ belong to $C$
in order to conlcude the proof.

Suppose first that neither $e$ nor $e^{\prime}$ belongs to $C$. We may assume
that $x$ is closer to $C$ than $y$, and similarly $x^{\prime}$ is closer to
$C$ than $y^{\prime}$. Then $d(e,s)=d(x,s)$ and $d(e^{\prime},s)=d(x^{\prime
},s)$ for every $s\in S$. Now, Lemma~\ref{l.gt.v} assures that a vertex from
$S$ distinguishes $x$ and $x^{\prime}$, and so it distinguishes $e$ and
$e^{\prime}$ as well.

Suppose now that both $e$ and $e^{\prime}$ belong to $C$. Let $P_{x}$ and
$P_{y}$ be the paths from $C-e-e^{\prime}$. We may assume that end-vertices of
$P_{x}$ are $x$ and $x^{\prime}$ and the end-vertices of $P_{y}$ are $y$ and
$y^{\prime}$. Notice that a vertex $s$ from $P_{x}$ does not distinguish $e$
and $e^{\prime}$ only if $P_{x}$ is of even length and $s$ sits in the middle
of it. Similarly holds for $P_{y}$. But as we have at least three vertices in
$S$, we conclude that the third one must distinguish these two edges.

Suppose now that $e$ belongs to $C$ and $e^{\prime}$ does not. We may assume
that $x^{\prime}$ is closer to $C$ than $y^{\prime}$. In case $x^{\prime}=x$
note that there is a vertex $s\in S$ such that $d(s,e)=d(s,y)<d(s,x),$ since
otherwise $S$ would not contain geodesic triple. But then
$d(s,x)=d(s,x^{\prime})=d(s,e^{\prime})$ further implies $S$ distinguishes $e$
and $e^{\prime}$. Similarly argue when $x^{\prime}=y$. So, we can assume
$x^{\prime}$ is distinct from $x$ and $y$.

Next, consider the graph $G/e$ obtained by contracting the edge $e$ into a
vertex $v_{xy}$. As $C$ is of length $\ge4$, the new graph is also a unicyclic
graph. Notice that for any vertex $s$ from $C$, we have $d_{G}(s,e)=d_{G/e}%
(s,v_{xy})$ as the corresponding shortest paths belong to the cycles $C$ and
$C/e$, and they coincide.

Now, by Proposition~\ref{l.gt.e}, we have a vertex $s$ that distinguishes
$x^{\prime}$ and $v_{xy}$ in $G/e$. Consider two possibilities in order to
complete the proof. First, if $d_{G}(x^{\prime},s)=d_{G/e}(x^{\prime},s)$,
then the same $s$ distinguishes $x^{\prime}$ and $e$ in $G$, and hence
$e^{\prime}$ and $e$. The second possibility is when $d_{G}(x^{\prime
},s)=d_{G/e}(x^{\prime},s)+1$, i.e. the shortest path between $s$ and
$x^{\prime}$ goes through edge $e$, and it gets decreases by 1 after
contracting $e$. But in that case obviously it must hold $d(s,e)>d(s,x^{\prime
}) =d(s,e^{\prime})$ as $x$ and $y$ belong on the shortest path from $s$ to
$e^{\prime}$. This concludes the proof.
\end{proof}


\section{Branching-resolving sets vs. geodesic triples}

The last two lemmas will now help us to prove that even if a geodesic triple
of vertices from the cycle $C$ is not introduced into a branch-resolving set
$S$, but there are three $S$-active vertices on that cycle, then the set $S$
will distinguish all pairs of vertices and all pairs of edges. In other words,
we will prove that it does not matter if the vertex included in set $S$ is
vertex $v_{i}$ from the cycle or any vertex from inside the tree $T_{v_{i}},$
as long as we have a geodesic formation all pairs of vertices and all pairs of
edges are distinguished. Let us state and prove this formally.

\begin{lemma}
\label{Prop_gt} Let $G$ be a unicyclic graph and let $S$ be a branch-resolving
set of $G$ with $a(S)\geq3$ and there are three $S$-active vertices on $C$
forming a geodesic triple. Then, $S$ is both a metric generator and an edge
metric generator of $G$.
\end{lemma}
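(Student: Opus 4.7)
The plan is to show that $S$ is both a metric generator and an edge metric generator by splitting every pair of vertices (and every pair of edges) into same-component and different-component pairs, with respect to the components of $G-E(C)$. The same-component case follows immediately from Lemma~\ref{Prop_SameComponent}, which applies because $S$ is branch-resolving with $a(S)\ge 3\ge 2$, so only the different-component case has to be established.

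For the different-component case, fix the three $S$-active cycle vertices $v_{i_1},v_{i_2},v_{i_3}$ forming the geodesic triple and, for each $\ell\in\{1,2,3\}$, pick a witness $s_\ell\in S\cap V(T_{v_{i_\ell}})$; set $\delta_\ell=d(v_{i_\ell},s_\ell)$. The pivotal distance identity is that for every vertex $y\notin V(T_{v_{i_\ell}})$ the shortest $y s_\ell$-path must traverse $v_{i_\ell}$, so
\[
d(y,s_\ell)=d(y,v_{i_\ell})+\delta_\ell.
\]
Hence whenever two vertices $x,x'$ (or two edges $e,e'$, seen through their endpoint closer to $C$) both lie outside $V(T_{v_{i_\ell}})$, the witness $s_\ell$ distinguishes them exactly when $v_{i_\ell}$ does. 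Coupling this with Lemmas~\ref{l.gt.v} and~\ref{l.gt.e} settles every different-component pair $x\in V(T_{v_i}),x'\in V(T_{v_j})$ which admits a distinguishing triple member $v_{i_\ell}$ with $i_\ell\notin\{i,j\}$.

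The remaining case is when every triple member distinguishing the pair has index in $\{i,j\}$; without loss of generality assume $i=i_\ell$ for some $\ell$, so $x\in V(T_{v_{i_\ell}})$, and set $\alpha=d(x,v_{i_\ell})$ and $\beta=d(x',v_j)$. Pick any triple member $v_{i_{\ell'}}$ with $\ell'\ne\ell$ that fails to distinguish $x,x'$ (at least one such exists, for otherwise the previous paragraph already handles the pair). The non-distinguishing equality has the form $\alpha+d(v_{i_\ell},v_{i_{\ell'}})=d(x',v_{i_{\ell'}})$, and the triangle inequality $d(x',v_{i_{\ell'}})\le d(x',v_{i_\ell})+d(v_{i_\ell},v_{i_{\ell'}})$ then yields $\alpha\le d(x',v_{i_\ell})$. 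Since $v_{i_\ell}$ distinguishes $x,x'$ this inequality is strict, so
\[
d(x,s_\ell)\le \alpha+\delta_\ell<d(x',v_{i_\ell})+\delta_\ell=d(x',s_\ell),
\]
and $s_\ell$ distinguishes the pair. The edge version proceeds in parallel: edges not on $C$ are handled by the same inequality applied to the endpoints closer to $C$ together with Lemma~\ref{l.gt.e}, while edges on $C$ are covered by the extension of the identity (one checks that $d(s_\ell,e)=d(v_{i_\ell},e)+\delta_\ell$ still holds for a cycle edge $e$ since at most one of its endpoints lies in $V(T_{v_{i_\ell}})$) and, if necessary, by the contraction trick from the proof of Lemma~\ref{l.gt.e}.

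The main obstacle is that when $x$ and $s_\ell$ both lie in $T_{v_{i_\ell}}$ the clean identity $d(x,s_\ell)=\alpha+\delta_\ell$ may degrade to a mere inequality $d(x,s_\ell)\le\alpha+\delta_\ell$ because of a shortcut inside $T_{v_{i_\ell}}$; the triangle-inequality step above compensates by turning the non-distinguishing equality into a strict upper bound for $\alpha$ in terms of $d(x',v_{i_\ell})$, tight enough to absorb any such shortcut. A brief case enumeration according to whether $j$ coincides with some $i_{\ell'}$ and which triple members happen to distinguish is required to cover all configurations, but every sub-case collapses to the same triangle-inequality template.
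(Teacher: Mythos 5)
Your proof is correct, and it shares the paper's overall blueprint --- same-component pairs are dispatched by Lemma~\ref{Prop_SameComponent}, different-component pairs by Lemmas~\ref{l.gt.v} and~\ref{l.gt.e} followed by lifting a distinguishing cycle vertex $v_{i_\ell}$ to a witness $s_\ell\in S\cap V(T_{v_{i_\ell}})$ via the identity $d(y,s_\ell)=d(y,v_{i_\ell})+\delta_\ell$ --- but your handling of the hard case (all distinguishing triple members have index in $\{i,j\}$) is genuinely different. The paper argues by contradiction: assuming the witness $s\in V(T_{v_0})$ fails, it extracts the inequality $d(x,v_0)\ge d(x',v_i)+d(v_i,v_0)$ and then invokes the geodesic-triple hypothesis to produce a second $S$-active vertex $v_l$ with $1\le l\le\lfloor g/2\rfloor$ whose witness must separate the pair, splitting further into the subcases $l\le i$ and $l>i$; it then reruns a separate three-part case analysis for edges. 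You instead exploit the triple member $v_{i_{\ell'}}$ whose index lies outside $\{i,j\}$ and which fails to distinguish: its failure plus the triangle inequality forces $d(x,v_i)\le d(x',v_i)$, strictness follows because $v_i$ does distinguish, and then $s_\ell$ itself separates the pair since $d(x,s_\ell)\le d(x,v_i)+\delta_\ell<d(x',v_i)+\delta_\ell=d(x',s_\ell)$. This buys a single template that transfers essentially verbatim to edges (your check that $d(s_\ell,e)=d(v_{i_\ell},e)+\delta_\ell$ also holds for cycle edges is the only extra ingredient, and the contraction fallback you mention is in fact never needed), and it avoids the positional analysis of where the second active vertex sits on the cycle. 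Two small points worth making explicit in a final write-up: first, a non-distinguishing triple member with $\ell'\ne\ell$ exists because the three triple indices are distinct, so at least one lies outside $\{i,j\}$, and in the remaining case that one must fail; second, the mixed edge case (one edge on $C$, one off) does fit the same template, with the off-cycle edge playing the role of the object inside a component and the cycle edge the role of the object outside it.
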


\begin{proof}
Let us first prove that $S$ is a metric generator. Suppose to the contrary,
i.e. there are two vertices $x,x^{\prime}\in V(G)$ which are not distinguished
by $S$. Without loss of generality we may assume that $x\in V(T_{v_{0}})$ and
$x^{\prime}\in V(T_{v_{i}})$ where $i\leq\left\lfloor g/2\right\rfloor $.
Since $S$ is a branch-resolving set with $a(S)\geq3$, Lemma
\ref{Prop_SameComponent} implies $i\not =0$. Now, Lemma \ref{l.gt.v} implies
that $x$ and $x^{\prime}$ are distinguished by a geodesic triple of $S$-active
vertices on $C$, and we want to prove that $x$ and $x^{\prime}$ are
distinguished by $S$ as well. Suppose vertices $x$ and $x^{\prime}$ are
distinguished by $S$-active vertex $v_{j}\in C$. If $j\not \in \{0,i\}$ then
for a vertex $s\in S\cap V(T_{v_{j}})$ the fact $d(x,v_{j})\not =d(x^{\prime
},v_{j})$ implies that
\[
d(x,s)=d(x,v_{j})+d(v_{j},s)\not =d(x^{\prime},v_{j})+d(v_{j},s)=d(x^{\prime
},s),
\]
and the claim follows. Suppose therefore that $j\in\{0,i\}$, say $j=0$. Since
$x$ and $x^{\prime}$ are distinguished by $v_{j=0},$ then obviously if $s\in
S\cap V(T_{v_{0}})$ does not distinguish $x$ and $x^{\prime}$ it must hold
$s\not =v_{0}.$ Let $P$ be the shortest path connecting vertices $x$ and $s$
and let $v$ be the vertex on path $P$ which is closest to $v_{0}.$ Then we
have%
\begin{align*}
d(x,s)  &  =d(x,v)+d(v,s),\\
d(x^{\prime},s)  &  =d(x^{\prime},v_{i})+d(v_{i},v_{0})+d(v_{0},v)+d(v,s).
\end{align*}
The fact that $d(x,s)=d(x^{\prime},s)$ implies%
\[
d(x,v)=d(x^{\prime},v_{i})+d(v_{i},v_{0})+d(v_{0},v),
\]
where we can add $d(v_{0},v)$ to both sides of equality and then from
$d(v_{0},v)\geq0$ deduce
\begin{equation}
d(x,v_{0})\geq d(x^{\prime},v_{i})+d(v_{i},v_{0}). \label{l6.in}%
\end{equation}

Note that the assumption that three $S$-active vertices on $C$ forming a
geodesic triple implies that there must exist an $S$-active vertex $v_{l}$
such that $1\leq l\leq\left\lfloor g/2\right\rfloor $, so let us denote by
$s^{\prime}$ a vertex from $S\cap T_{v_{l}}$. There are two possibilities, it
is either $l\leq i$ or $l>i$. In the case when $l\leq i$ we have
\begin{align*}
d(x,s^{\prime})  &  =d(x,v_{0})+d(v_{0},v_{l})+d(v_{l},s^{\prime})\\
&  \ge d(x^{\prime},v_{i})+d(v_{i},v_{0})+d(v_{0},v_{l})+d(v_{l},s^{\prime})\\
&  >d(x^{\prime},v_{i})+d(v_{i},v_{l})+d(v_{l},s^{\prime})\\
&  = d(x^{\prime},s^{\prime}),
\end{align*}
which is a contradiction. In the case when $l>i$ from the facts that
$d(x,v_{0})>d(x^{\prime},v_{i})$ and $0<i<l\le\left\lfloor g/2\right\rfloor $
obviously follows $d(x,s^{\prime})>d(x^{\prime},s^{\prime}),$ which is again a contradiction.

Let us now prove that $S$ is a metric edge generator. Suppose to the contrary,
there are two edges $e=xy$ and $e^{\prime}=x^{\prime}y^{\prime}$ which are not
distinguished by $S$. We assume that end-vertices of $e$ and $e^{\prime}$ are
denoted so that $x$ and $x^{\prime}$ are closer to cycle $C$ then $y$ and
$y^{\prime}$ respectively (if there is the difference between those two
distances). Again, from $a(S)\geq2$ and Lemma \ref{l.gt.e} follows that $e$
and $e^{\prime}$ do not belong to the same connected component of $G-E(C)$.

Suppose first that neither $e$ nor $e^{\prime}$ belong to the cycle $C$. Then
let $G_{y}$, $G_{y^{\prime}}$ and $G_{x}$ be the connected components of
$G-\{e,e^{\prime}\}$ containing vertices $y,$ $y^{\prime}$ and $x$
respectively. If $G_{y}$ or $G_{y^{\prime}}$ contain a vertex from $S,$ then
$e$ and $e^{\prime}$ would be distinguished by $S,$ therefore $S\subseteq
V(G_{x})$. But for every $z\in V(G_{x})$ we have $d(e,z)=d(x,z)$ and
$d(e^{\prime},z)=d(x^{\prime},z),$ so $e$ and $e^{\prime}$ must be
distinguished by $S$ since $x$ and $x^{\prime}$ are distinguished by $S,$ so
we obtained a contradiction.

Suppose now that both $e$ and $e^{\prime}$ belong to the cycle $C$. Lemma
\ref{l.gt.e} implies that $e$ and $e^{\prime}$ are distinguished by $S$-active
vertex from $C,$ say $v_{k}$. But then $d(e,v_{k})\not = d(e^{\prime},v_{k})$
implies that for $s\in S\cap V(T_{v_{k}})$ we have
\[
d(e,s)=d(e,v_{k})+d(v_{k},s)\not =d(e^{\prime},v_{k})+d(v_{k},s)=d(e^{\prime
},s),
\]
which is a contradiction.

Suppose finally that exactly one of $e$ and $e^{\prime}$ is contained on $C,$
say $e^{\prime}$. Without loss of generality we may assume $e\in E(T_{v_{0}})$
and $e^{\prime}=v_{i}v_{i+1}$ where $0\leq i\leq\left\lfloor g/2\right\rfloor
-1$. Again, Lemma \ref{l.gt.e} implies that $e$ and $e^{\prime}$ are
distinguished by an $S$-active vertex from $C,$ say $v_{k}$. If $k\not =0$
then the similar argument as in previous case yields that $e$ and $e^{\prime}$
must be distinguished by $S$. Suppose therefore that $k=0$. Let $s\in S\cap
V(T_{0})$, then $s\not =v_{0}$ otherwise $e$ and $e^{\prime}$ would be
distinguished by $s$. Since $s$ does not distinguish $e$ and $e^{\prime}$, the
fact $s\not =v_{i}$ implies $d(e,v_{0})>d(e^{\prime},v_{0})$. Since there is a
geodesic triple of $S$-active vertices on $C,$ there must exist $S$-active
vertex $v_{l}$ for $1\leq l\leq\left\lfloor g/2 \right\rfloor -1$. The fact
$d(e,v_{0})>d(e^{\prime},v_{0})$ implies $d(e,v_{l})>d(e^{\prime},v_{l})$
which further implies $d(e,s^{\prime})>d(e^{\prime},s^{\prime})$ for
$s^{\prime}\in S\cap V(T_{v_{l}})$. Therefore, $e$ and $e^{\prime}$ are
distinguished by $S$ which is a contradiction.
\end{proof}


\section{Vertex and edge dimensions of unicyclic graphs}

In this section, we show that value of the vertex metric dimension and the
value of edge metric dimension of a unicyclic graph can be one of two
consecutive integers, whose values are determined by a formula of $L(G)$ and
$b(G)$. Recall that $b(G)$ denotes the number of branch-active vertices on the
only cycle $C$ in a unicyclic graph $G.$ With the use of Lemmas \ref{l.br1} to
\ref{Prop_gt} we obrain easily the following Theorem \ref{t.1}. Later with
more involved arguments we extend it to cactus graphs.

\begin{theorem}
\label{t.1} Let $G$ be an unicyclic graph. Then each of $\dim(G)$ and
$\mathrm{edim}(G)$ has value $L(G)+\max\{2-b(G),0\}$ or $L(G)+\max\{2-b(G),0\}
+ 1$.
\end{theorem}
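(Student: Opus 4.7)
The plan is to bracket both $\dim(G)$ and $\mathrm{edim}(G)$ between the consecutive integers $L(G)+\max\{2-b(G),0\}$ and $L(G)+\max\{2-b(G),0\}+1$ by combining the three preceding lemmas through a short case analysis on $b(G)$. Since Lemmas~\ref{l.br1}, \ref{Prop_SameComponent} and \ref{Prop_gt} all treat the vertex and edge versions in parallel, a single argument will cover both invariants.

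For the lower bound I would start from Lemma~\ref{l.br1}, which guarantees that any vertex or edge metric generator $S$ is a branch-resolving set with $a(S)\ge 2$. The branch-resolving requirement immediately gives $|S|\ge L(G)$. Next I would establish the auxiliary fact that $a(S)\ge b(G)$ for every branch-resolving set: in any branch-active subtree $T_{v_i}$, a deepest branching vertex $u$ has at least two subtrees directed away from the cycle that are entirely threads, so the branch-resolving condition forces $S$ to meet at least one of them, whence $v_i\in A(S)$. Combining $a(S)\ge\max\{2,b(G)\}$ with $|S|\ge L(G)$ and splitting into the cases $b(G)=0$, $b(G)=1$, $b(G)\ge 2$ yields $|S|\ge L(G)+\max\{2-b(G),0\}$; in the first two cases extra vertices must be inserted into $S$-inactive subtrees $T_{v_j}$ with $v_j\in V(C)$ in order to achieve $a(S)\ge 2$.

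For the upper bound I would exhibit a set $S$ of size $L(G)+\max\{2-b(G),0\}+1$ that satisfies the hypotheses of Lemma~\ref{Prop_gt}, i.e.\ a branch-resolving set with three $S$-active vertices on $C$ forming a geodesic triple. Start from a minimum leaf branch-resolving set $S_0$, for which $|S_0|=L(G)$ and $a(S_0)=b(G)$, and then augment $S_0$ by $\max\{3-b(G),1\}$ carefully chosen vertices of $V(C)$: when $b(G)\ge 2$, add one cycle vertex that together with two existing $S_0$-active cycle vertices forms a geodesic triple; when $b(G)=1$ (resp.\ $b(G)=0$), add two (resp.\ three) cycle vertices completing a geodesic triple of $S$-active vertices. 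These completions always exist because for any two vertices of $C$ a suitable third vertex of $C$ can be found to form a geodesic triple with them. Lemma~\ref{Prop_gt} then certifies that $S$ is simultaneously a vertex and an edge metric generator, and a direct count shows $|S|=L(G)+\max\{2-b(G),0\}+1$ in each case.

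The main subtle point is the auxiliary inequality $a(S)\ge b(G)$: it requires tracing the tree structure of each branch-active $T_{v_i}$ down to a deepest branching vertex and arguing that at least two threads are attached there, so that branch-resolving automatically activates $v_i$. Once this structural observation is in place, the three lemmas fit together mechanically, the upper and lower bounds differ by exactly one, and the theorem follows.
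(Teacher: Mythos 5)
Your proposal is correct and follows essentially the same route as the paper: the lower bound comes from Lemma~\ref{l.br1} together with the observation that the $L(G)$ vertices forced by branch-resolving activate only branch-active cycle vertices, and the upper bound comes from augmenting a minimum branch-resolving set with cycle vertices to obtain a geodesic triple of active vertices and invoking Lemma~\ref{Prop_gt}. Your explicit "deepest branching vertex" argument for $a(S)\ge b(G)$ is a welcome detail that the paper only asserts implicitly via $a(S^{\ast})=b(G)$.
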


\begin{proof}
Let $S$ be a vertex or edge metric generator of $G$ of the smallest possible
size. Notice that $S$ must contain a set of vertices $S^{\ast}\subseteq
V(G)\setminus V(C)$ that is a branch-resolving set for $G$. As we are assuming
that $S$ is a smallest possible set, we may assume that $|S^{\ast}|=L(G)$ and
in particular $a(S^{\ast})=b(G)$. Lemma~\ref{l.br1} implies that at least
$\max\{2-b(G),0\}$ vertices must be introduced to the branch-resolving set
$S^{\ast}$ in order to become a vertex (resp. an edge) metric generator.
Therefore, $S\setminus S^{\ast}$ contains a set $S^{\prime}$ of $\max
\{2-b(G),0\}$ vertices from $C$. Thus, we obtain that $S^{\ast}\cup S^{\prime
}$ must be of order at least $L(G)+\max\{2-b(G),0\}$. This establishes the
lower bound.

Let us now use the result of Lemma~\ref{Prop_gt} to obtain the upper bound. We
use the same notation as above. Notice that the set of active vertices of
$S^{\ast}$ union $S^{\prime}$, i.e. $A(S^{\ast})\cup S^{\prime}$ is of order
at least 2 but this union may or may not contain a geodesic triple. Notice by
introducing to any set of vertices of $C$ of size $\geq2$ a carefully selected
new vertex $x$, we can always assure that the enlarged set $A(S^{\ast})\cup
S^{\prime}\cup\{x\}$ contains a geodesic triple. Now Lemma~\ref{Prop_gt}
implies that $S^{\ast}\cup S^{\prime}\cup\{x\}$ is a vertex (resp. an edge)
metric generator of $G$. As the latter union is of size $L(G)+\max
\{2-b(G),0\}+1$, we establish the upper bound.
\end{proof}

The above theorem gives us promptly the following result.

\begin{corollary}
\label{c.1} Let $G$ be a unicyclic graph. Then $\left\vert \dim
(G)-\operatorname*{edim}(G)\right\vert \leq1$.
\end{corollary}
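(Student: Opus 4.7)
The plan is to derive the corollary directly from Theorem~\ref{t.1}. The theorem guarantees that both $\dim(G)$ and $\operatorname{edim}(G)$ are contained in the two-element set
\[
\bigl\{\, N,\ N+1 \,\bigr\}, \qquad \text{where } N = L(G)+\max\{2-b(G),0\}.
\]
Since any two integers lying in this set differ by at most $1$, setting $a = \dim(G)-N \in \{0,1\}$ and $b = \operatorname{edim}(G)-N \in \{0,1\}$ yields $|\dim(G)-\operatorname{edim}(G)| = |a-b| \leq 1$, which is exactly the claim.

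The only thing worth noting is that we must invoke Theorem~\ref{t.1} twice — once for $\dim(G)$ and once for $\operatorname{edim}(G)$ — and observe that both invocations produce the same pair of admissible integers $N$ and $N+1$. This is immediate from the statement of the theorem, since the formula $L(G)+\max\{2-b(G),0\}$ depends only on the structure of $G$ and not on which of the two metric dimensions one is considering. Hence there is no real obstacle here; the corollary is a one-line consequence, and the proof is essentially an inclusion argument for intervals of length one.
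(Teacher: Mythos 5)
Your proposal is correct and matches the paper's approach exactly: the paper derives the corollary immediately from Theorem~\ref{t.1}, since both $\dim(G)$ and $\mathrm{edim}(G)$ lie in the same two-element set $\{N, N+1\}$ with $N = L(G)+\max\{2-b(G),0\}$, forcing their difference to be at most $1$.
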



\section{Metric dimensions in graphs with edge disjoint cycles}

The results for unicyclic graphs from previous sections can now be extended to
graphs with more cycles than one, as long as those cycles are edge disjoint.
Namely, if cycles in a graph are edge disjoint, then for every cycle in such
graph there is a restriction of the graph to a unicyclic subgraph in which
that cycle is the only cycle. Such restrictions are not necessarily disjoint,
but they cover the whole graph. Then, for a set of vertices in a graph we can
also consider a restriction to a unicyclic subgraphs (with few necessary
accomodations) and then apply to it the results from previous sections, which
then yields the conditions under which such set is a metric generator in a
wider graph. In orther to realize all this, we introduce the following more
formal definitions.

We say that $G$ is a \emph{cactus} graph if all cycles in $G$ are pairwise
edge disjoint. Let $C$ be a cycle in a cactus graph $G$ and let $v$ be a
vertex on it. Note that the connected component $T_{v}$ of $G-E(C)$ in the
case of a cactus graph does not have to be a tree, it can be a cactus graph,
i.e. it may contain cycles. Nevertheless, we extend the definition of
branching and branch-active vertices in a similar way. Similarly, we denote by
$b(C)$ the number of branch-active vertices on $C$ (which now cannot be
denoted by $b(G)$ as we did with unicyclic graphs, since cactus graph may have
more than one cycle).

\bigskip

\begin{lemma}
Let $G$ be a cactus graph with $c$ cycles $C_{1},C_{2},\ldots,C_{c}$. Then,
both $\dim(G)$ and $\mathrm{edim}(G)$ are greater or equal than
\[
L(G)+\sum_{i=1}^{c}\max\{2-b(C_{i}),0\}.
\]

\end{lemma}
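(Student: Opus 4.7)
The plan is to mirror the proof of the unicyclic lower bound (Theorem \ref{t.1}) separately at each cycle and then combine the contributions. The first step is to show that Lemma \ref{l.br1} extends verbatim to cactus graphs: any vertex (or edge) metric generator $S$ of $G$ must be a branch-resolving set, and for every cycle $C_i$ of $G$ the number of $S$-active vertices on $C_i$ satisfies $a_i(S) \geq 2$. The branch-resolving claim is proved exactly as in Lemma \ref{l.br1}; the second is proved independently for each $C_i$ by the same argument, noting that if $S$ were contained in a single component $V(T_v^{(i)})$, the two neighbors of $v$ on $C_i$ (respectively the two edges of $C_i$ incident with $v$) would be at equal distance from every vertex of $S$.

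Next, since $S$ is branch-resolving, extract a subset $S_L \subseteq S$ of exactly $L(G)$ thread vertices forming a minimum branch-resolving set, and set $S_R = S \setminus S_L$. For each cycle $C_i$, the condition $a_i(S) \geq 2$ together with the fact that at most $b(C_i)$ of the $S$-active vertices on $C_i$ are branch-active forces at least $\max\{2 - b(C_i), 0\}$ of them to be non-branch-active. For every such non-branch-active $v \in V(C_i)$ that is $S$-active, the component $T_v^{(i)}$ contains no branching vertex, hence no thread vertex, so the $S$-vertex witnessing the activation of $v$ must lie in $S_R \cap V(T_v^{(i)})$; moreover, distinct such $v$ within the same cycle contribute distinct witnesses because the components $T_v^{(i)}$ are pairwise disjoint.

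The main obstacle is to show that this counting is also disjoint across cycles: no single $x \in S_R$ can witness activations of non-branch-active vertices on two different cycles. The key structural observation is that for a non-branch-active $v \in V(C_i)$, the component $T_v^{(i)}$ contains neither a branching vertex nor any cycle other than $C_i$, so it reduces to a simple path attached at $v$ (or just $\{v\}$). If some $x \in T_v^{(i)}$ also lay in $T_{v'}^{(j)}$ for a non-branch-active $v' \in V(C_j)$ with $j \neq i$, then the path from $x$ to $v'$ in $G - E(C_j)$ would have to leave $T_v^{(i)}$ through $v$, traverse $C_i$, and exit $C_i$ at some vertex $u \in V(C_i)$ with $\deg_G(u) \geq 3$; depending on whether $u$ lies off or on $C_j$ (with $\deg_G(u) \geq 4$ in the latter case), $u$ would be branching with respect to $C_j$, placing a branching vertex inside $T_{v'}^{(j)}$ and contradicting the non-branch-activity of $v'$.

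Combining everything, each cycle $C_i$ claims at least $\max\{2-b(C_i),0\}$ distinct witnesses in $S_R$, with all claims disjoint across cycles, so $|S_R| \geq \sum_{i=1}^c \max\{2-b(C_i),0\}$. Adding $|S_L| = L(G)$ yields $|S| \geq L(G) + \sum_{i=1}^c \max\{2-b(C_i),0\}$. Since the extension of Lemma \ref{l.br1} and all the counting above treat vertex- and edge-generators in the same way, the same bound is valid for both $\dim(G)$ and $\mathrm{edim}(G)$.
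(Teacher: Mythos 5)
Your argument is correct and follows essentially the same route as the paper's: the paper's proof derives the same two necessary conditions (being branch-resolving, and having at least two $S$-active vertices on each cycle) by arguing contrapositively that any set below the bound must violate one of them, and your witness-counting argument --- with its disjointness of witnesses within and across cycles --- is precisely the justification of the step the paper asserts without proof when it claims that ``one of the following must hold.'' The only slip is the assertion that $T_v^{(i)}$ for a non-branch-active $v$ contains ``no thread vertex'': it may itself be a single thread attached to a degree-$3$ vertex $v$ of $C_i$, but since then $\ell(v)\leq 1$ that thread contributes nothing to $L(G)$ and no vertex of it is selected into $S_L$, so your conclusion that the witness lies in $S_R$ still holds.
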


\begin{proof}
Let $S\subseteq V(G)$ be a set of vertices from $G$ such that $\left\vert
S\right\vert <L(G)+\sum_{i=1}^{c}\max\{2-b(C_{i}),0\}$. We want to prove that
such $S$ can be neither a vertex nor an edge metric generator. Note that the
one of the following must hold for the set $S$:

\begin{itemize}
\item there is a vertex $v$ in $G$ of degree $\geq3$ such that there are two
threads attached to $v$ which do not contain a vertex from $S;$

\item there is a cycle $C$ in $G$ such that at most one vertex on $C$ is $S$-active.
\end{itemize}

In the first case, let $w_{1}$ and $z_{1}$ be the vertices on those two
threads incident to $v,$ then obviously $w_{1}$ and $z_{1}$ are not
distinguished by $S$, therefore $S$ is not a vertex metric generator. The same
holds for edges $vw_{1}$ and $vz_{1}$ which means $S$ cannot be an edge metric
generator either.

In the second case, if there is no $S$-active vertices on $C$ it means
$S=\phi,$ so $S$ cannot be a metric generator. If there is exactly one
$S$-active vertex $v$ on $C$, let $v_{1}$ and $v_{2}$ be the two neighbors of
$v$ on $C$. Then obviously $v_{1}$ and $v_{2}$ are not distinguished by $S$,
consequently $S$ cannot be a vertex metric generator. The same holds for edges
$v_{1}v$ and $v_{2}v$ and thus $S$ cannot be an edge metric generator either.
\end{proof}

\bigskip We say that a path $P$ in a cactus graph $G$ is a \emph{connector} of
cycles $C_{i}$ and $C_{j}$ of $G$ if the end-vertices $u$ and $v$ of $P$
belong to $V(C_{i})$ and $V(C_{j})$, respectively, and $P$ does not share any
other vertex besides $u$ and $v$ with any cycle in $G$ (for the illustration
see Figure \ref{Figure14}). We also say that $P$ \emph{connects} $C_{i}$ and
$C_{j}$ or that it is \emph{incident} to those two cycles. The \emph{domain}
$G_{i}$ of the cycle $C_{i}$ in a cactus graph $G$ is the graph consisting of
the cycle $C_{i}$, all connector paths incident to $C_{i}$ and all threads and
branches incident to either $C_{i}$ or the corresponding connector paths (see
Figure \ref{Figure13}). Note that $G_{i}$ is a unicyclic graph with $C_{i}$
being its only cycle. Also, note that for two distinct cycles $C_{i}$ and
$C_{j}$ their corresponding domain graphs $G_{i}$ and $G_{j}$ may not be
vertex disjoint because there may exist a connector path between cycles
$C_{i}$ and $C_{j}$ in which case their corresponding domains $G_{i}$ and
$G_{j}$ share that connector paths and all threads and branching hanging on
the vertices of that connector. Finally, $G_{i}$ is obviously an isometric
subgraph of $G$.

We say that a vertex $v_{j}$ from a domain $G_{i}$ is a \emph{boundary} vertex
if $v_{j}$ belongs to $C_{j}$ in $G$ where $j\not =i$. Note that the boundary
vertex $v_{j}$ is actually an end-vertex of the connector path between cycles
$C_{i}$ and $C_{j}$. We say that a vertex $v\in V(G)\setminus V(G_{i})$ is on
the \emph{other side} of the boundary vertex $v_{j}$ of $G_{i}$ if the
shortest path from $v$ to $C_{i}$ contains $v_{j}$ (see again Figure
\ref{Figure13}).

So far we have established domains in a cactus graph which are unicyclic
graphs and which cover the whole $G$. Since we are interested in sets of
vertices $S$ in a cactus graph which are metric generators, we also want to
divide such sets into parts which will be metric generators in those domains,
but we cannot simply take a restriction of the set $S$ onto a domain, since
two vertices or two edges from the same domain can in a cactus graph be
distinguished by a vertex $s\in S$ which is outside that domain, because of
which we must add boundary vertices to the restriction. Therefore, for a set
of vertices $S$ in $G$, let $S_{i}$ denote the set obtained from $S\cap
V(G_{i})$ by adding all boundary vertices from the domain $G_{i}$ to it.
Observe that the following holds: if $S$ is a branch-resolving set in $G$ then
$S_{i}$ is a branch-resolving set in $G_{i}$, also if there is a geodesic
triple of $S$-active vertices on every cycle in $G$ then there is a geodesic
triple of $S_{i}$-active vertices on $C_{i}$ in $G_{i}$.

\begin{figure}[h]
\begin{center}
\includegraphics[scale=1.0]{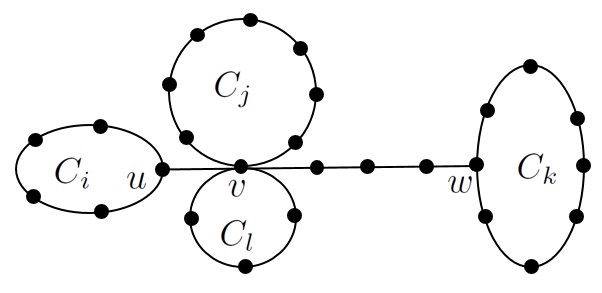}
\end{center}
\par
\caption{An illustration of a connector path: the edge $uv$ is a connector of
cycles $C_{i}$ and $C_{j}$ and also of $C_{i}$ and $C_{l}$. The path from $v$
to $w$ is a connector of cycles $C_{j}$ and $C_{k},$ but the path from $u$ to
$w$ is not a connector of cycles $C_{i}$ and $C_{k}.$ Note that a connector
path can consist of only one vertex ($v$ is a connector of $C_{j}$ and $C_{l}%
$).}%
\label{Figure14}%
\end{figure}

\begin{figure}[h]
\begin{center}
\includegraphics[scale=1.0]{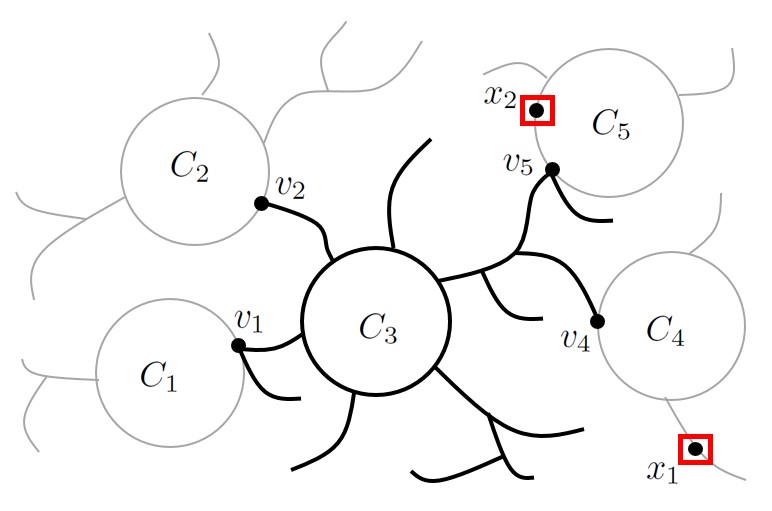}
\end{center}
\par
\caption{An example of a cactus graph with five cycles where the domain
$G_{3}$ of the cycle $C_{3}$ is emphesized. Vertices $v_{1},$ $v_{2},$ $v_{4}$
and $v_{5}$ are the boundary vertices of the domain $G_{3}.$ With respect to
the domain $G_{3},$ the vertex $x_{1}$ is on the other side of the boundary
vertex $v_{4},$ while $x_{2}$ is on the other side of the boundary vertex
$v_{5}.$}%
\label{Figure13}%
\end{figure}

\begin{lemma}
Let $G$ be a cactus graph with $c$ cycles $C_{1},C_{2},\ldots,C_{c}$. Then,
both $\dim(G)$ and $\mathrm{edim}(G)$ are smaller or equal than
\begin{equation}
L(G)+\sum_{i=1}^{c}\max\{2-b(C_{i}),0\}+c. \label{For_upperCactus}%
\end{equation}

\end{lemma}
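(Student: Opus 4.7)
The plan is to construct an explicit set $S\subseteq V(G)$ whose cardinality meets the bound (\ref{For_upperCactus}) and which is simultaneously a vertex and an edge metric generator. I would build $S$ in three stages. First, pick a minimum branch-resolving set $S^{\ast}$ consisting only of leaves, so $|S^{\ast}|=L(G)$ and every branch-active vertex on every cycle $C_i$ is $S^{\ast}$-active. Second, for each cycle $C_i$ add at most $\max\{2-b(C_i),0\}$ vertices of $V(C_i)$, so that the number of $S$-active vertices on $C_i$ becomes at least $2$. Third, for each cycle $C_i$ add one further vertex of $V(C_i)$ chosen so that the $S$-active vertices on $C_i$ contain a geodesic triple (always achievable, exactly as in the proof of Theorem~\ref{t.1}). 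The resulting $S$ has cardinality at most $L(G)+\sum_{i=1}^{c}\max\{2-b(C_i),0\}+c$.

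By construction $S$ is branch-resolving in $G$ and every cycle $C_i$ carries at least three $S$-active vertices forming a geodesic triple. The observation preceding the lemma then guarantees that, for each $i$, the set $S_i$ (meaning $S\cap V(G_i)$ together with all boundary vertices of $G_i$) is a branch-resolving set of the unicyclic graph $G_i$ with a geodesic triple of $S_i$-active vertices on $C_i$. Lemma~\ref{Prop_gt} applied to $G_i$ then yields that each $S_i$ is simultaneously a vertex and an edge metric generator of the isometric subgraph $G_i\subseteq G$.

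It then remains to check that $S$ itself distinguishes every pair of vertices in $G$ (the edge case is analogous). Given $x,x'\in V(G)$, I would pick a domain $G_i$ containing $x$. If $x'\in V(G_i)$, then $S_i$ provides a distinguishing $t\in S_i$; if instead $x'\notin V(G_i)$, then $x'$ lies strictly past a unique boundary vertex $v$ of $G_i$, so that $d_G(s,x')=d_G(s,v)+d_G(v,x')$ for every $s\in V(G_i)$, and the task reduces to finding $t\in S_i$ with $d_G(t,x)\neq d_G(t,v)+d_G(v,x')$, which one again extracts by replaying the geodesic triple argument of Lemma~\ref{Prop_gt} inside $G_i$. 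In either case, if $t\in S$ the isometry of $G_i$ finishes the job. Otherwise $t$ is a boundary vertex of $G_i$ lying on some other cycle $C_k$, and the geodesic triple of $S$-active vertices we placed on $C_k$ supplies a genuine $s\in S$ strictly on the far side of $t$ from $G_i$; since every shortest $G$-path from $s$ to any vertex of $G_i$ must traverse $t$, the equalities $d_G(s,z)=d_G(s,t)+d_G(t,z)$ for $z\in\{x,x'\}$ carry the distinguishing power of $t$ over to $s$.

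The hard part will be the case in which $x$ and $x'$ lie in disjoint domains, because a vertex of $S_i$ that distinguishes $x$ from the boundary vertex $v$ inside $G_i$ does not automatically distinguish $x$ from $x'$ in $G$: the uniform shift by the constant $d_G(v,x')$ can accidentally restore equality of distances. The role of the extra ``$+1$'' vertex per cycle, which ensures a geodesic triple on every $C_i$, is precisely to defeat all such coincidences by letting us replace a troublesome $t$ with an $S$-element on the opposite side of the boundary. The edge version of the verification follows the same template, using Lemma~\ref{l.gt.e} in place of Lemma~\ref{l.gt.v} where appropriate.
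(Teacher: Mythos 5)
Your construction of $S$ and your bound on $|S|$ coincide with the paper's, and your treatment of the case where $x$ and $x'$ lie in a common domain $G_i$ is essentially the paper's Case~1 (apply Lemma~\ref{Prop_gt} to $S_i$ in $G_i$, then transfer a distinguishing boundary vertex to a genuine $S$-vertex on its far side). The gap is in your Case~2, and it sits exactly where you flag ``the hard part.'' Your reduction to finding $t\in S_i$ with $d_G(t,x)\neq d_G(t,v)+d_G(v,x')$ is correct, but Lemma~\ref{Prop_gt} does not supply such a $t$: the quantity $d_G(t,v)+d_G(v,x')$ is not the distance from $t$ to any vertex of $G_i$, so ``replaying the geodesic triple argument inside $G_i$'' is not an available move without a new lemma. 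Your fallback --- transfer a troublesome boundary vertex $t$ to an $S$-element $s$ on its far side via $d_G(s,z)=d_G(s,t)+d_G(t,z)$ for $z\in\{x,x'\}$ --- fails precisely for the one boundary vertex that matters, namely $v$ itself (the boundary vertex toward $x'$): when $s$ and $x'$ both lie beyond $v$, the shortest path from $s$ to $x'$ need not pass through $v$, so the identity $d_G(s,x')=d_G(s,v)+d_G(v,x')$ is false in general and the ``distinguishing power'' of $v$ does not carry over.

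The paper closes this gap by a construction you do not have: it extends the domain $G_i$ to a larger unicyclic isometric subgraph $G_i'$ by attaching the shortest path $P_1$ from $v_j$ to $x'$ \emph{and} a second path $P_2$ from $v_j$ to an actual vertex $s'\in S$ beyond $v_j$, with $s'$ chosen so that $P_2$ passes through an $S$-active vertex $w$ of $C_k$ lying on the same half of $C_k$ as the entry and exit vertices $u,v$ of $C_k$ (this is where the geodesic triple on $C_k$ is really used, and it is what makes $G_i'$ isometric and unicyclic). It then replaces the boundary vertex $v_j$ by $s'$ in $S_i$, restoring the branch-resolving property that the two new threads at the branch point of $P_1$ and $P_2$ would otherwise destroy, and applies Lemma~\ref{Prop_gt} to $G_i'$ so that $x$ and $x'$ are distinguished by an honest element of $S$. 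To complete your proof you would need to carry out this domain-extension step (or prove a strengthened version of Lemma~\ref{Prop_gt} that handles a distinguished vertex plus an additive constant); as written, Case~2 is asserted rather than proved.
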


\begin{proof}
Let us construct a set of vertices $S$ such that $\left\vert S\right\vert $ is
smaller or equal than (\ref{For_upperCactus}), for which we will prove it is
both a vertex and an edge metric generator. The set $S$ will consist of three
parts as the desired bound is the sum of three numbers. Let $S_{a}$ be a
minimum branch-resolving set in $G$, thus $\left\vert S_{a}\right\vert =L(G)$.
Further, let $S_{b}$ consist of $\max\{2-b(C_{i}),0\}$ vertices from every
cycle $C_{i}$ not contained in $S_{a}$. Note that $S_{b}$ contains at most two
vertices from every cycle in $G$ which are chosen so that every cycle in $G$
will have at least two $(S_{a}\cup S_{b})$-active vertices. Finally, note that
a cycle in $G$ may or may not have a geodesic triple of $(S_{a}\cup S_{b}%
)$-active vertices, so we define a third set $S_{c}$ in a following way. Let
$S_{c}$ contain a vertex from every cycle in $G$ which does not have a
geodesic triple of $(S_{a}\cup S_{b})$-active vertices, which vertex is chosen
so that it forms a geodesic triple with the two $(S_{a}\cup S_{b})$-active
vertices which must exist on each cycle. Notice again that for any two
vertices on a cycle we can choose easily the third one so that they form a
geodesic triple. Therefore, $\left\vert S_{c}\right\vert \leq c$. Now we
define $S=S_{a}\cup S_{b}\cup S_{c}$. Obviously
\[
\left\vert S\right\vert \leq L(G)+\sum_{i=1}^{c}\max\{2-b(C_{i}),0\}+c.
\]
Also, note that $S$ is a branch-resolving set with a geodesic triple of
$S$-active vertices on every cycle $C_{i}$ in $G$.

We want to prove that $S$ is both a vertex and an edge metric generator. Let
$x$ and $x^{\prime}$ be any two distinct vertices or two edges in $G.$ In
order to do so, we distinguish the following two cases.

\bigskip\noindent\textbf{Case 1:} \emph{$x,x^{\prime}$ both belong to a same
domain.} Denote this domain by $G_{i}$. Since $S_{i}$ is a branch-resolving
set with a geodesic triple of $S_{i}$-active vertices in a unicyclic graph
$G_{i}$, then Lemma \ref{Prop_gt} implies $S_{i}$ distinguishes $x$ and
$x^{\prime}$ in $G_{i}.$ Recall that $G_{i}$ is a isometric subgraph of $G,$
so if $x$ and $x^{\prime}$ are distinguished by a vertex $s\in S_{i}\cap S$
then $x$ and $x^{\prime}$ are distinguished by $S$ in $G.$ Suppose now that
$x$ and $x^{\prime}$ are distinguished in $G_{i}$ by $s\in S_{i}\backslash S.$
This implies that $s$ is a boundary vertex in $G_{i},$ but this further
implies there must exist a vertex $s_{1}\in S$ on the other side of the
boundary vertex $s$ since otherwise we would have a contradiction with our
assumption that there are at least two $S$-active vertices on every cycle in
$G.$ Now, as $s$ distinguishes $x$ and $x^{\prime}$ then certainly $s_{1}\in
S$ distinguishes them too, so $x$ and $x^{\prime}$ are distinguished by $S$ in
$G.$\begin{figure}[h]
\begin{center}
\includegraphics[scale=1.0]{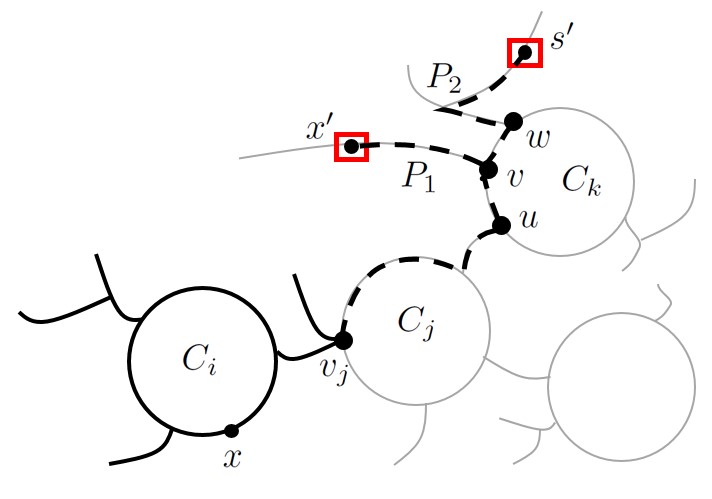}
\end{center}
\par
\caption{An illustration of the graph $G_{i}^{\prime}$: the domain $G_{i}$ of
the cycle $C_{i}$ contains $x$, but does not contain $x^{\prime}$ nor
$s^{\prime},$ so it has to be extended by paths $P_{1}$ and $P_{2}$ which lead
from the boundary vertex $v_{j}$ to $x^{\prime}$ and $s^{\prime}$
respectively.}%
\label{Figure17}%
\end{figure}

\bigskip\noindent\textbf{Case 2:} \emph{$x,x^{\prime}$ do not belong to a same
domain.} Suppose that $x$ belongs to $G_{i}$ and $x^{\prime}$ belongs to
$G_{k}$ where $k\not =i$. So, we assume that $x^{\prime}$ does not belong to
$G_{i}$ and $x$ does not belong to $G_{k}$, otherwise it reduces to the
previous case.

In this case we also want to apply the result for the unicyclic graphs, but as
$x$ and $x^{\prime}$ do not belong to a same domain, now we will have to
expand the domain $G_{i}$ which contains $x$ so that it contains $x^{\prime}$
as well. In order to do so (and the construction that follows is illustrated
in Figure \ref{Figure17}), let $v_{j}\in V(C_{j})$, for $j\not =i,$ be the
boundary vertex of the domain $G_{i}$ closest to $x^{\prime}.$ Let $u$ be the
vertex on cycle $C_{k}$ closest to $v_{j}$ and let $v$ be the vertex on the
cycle $C_{k}$ closest to $x^{\prime}$. Finally, assuming that $l_{k}$ denotes
the length of the cycle $C_{k}$, let $w$ be the $S$-active vertex on $C_{k}$
such that vertices $u,$ $v$ and $w$ belong to a same half of the cycle $C_{k}%
$, where a half of the cycle is any path on the cycle of length $\leq
\lceil(l_{k}-1)/2\rceil$. Note that it is always possible to choose such $w$
because there is a geodesic triple of $S$-active vertices on the cycle
$C_{k}.$ 

Let $P_{1}$ be the shortest path from $x^{\prime}$ to $v_{j}$ in $G,$ note
that $P_{1}$ contains the vertex $v.$ Let $s^{\prime}\in S$ be a vertex not
contained in the domain $G_{i}$ on the same side of the boundary vertex
$v_{j}$ as $x^{\prime},$ which is chosen so that the shortest path $P_{2}$
from $s^{\prime}$ to $v_{j}$ contains the vertex $w.$ This is possible since
$w$ is $S$-active vertex. Finally, let $G_{i}^{\prime}$ be the extension of
the domain $G_{i}$ obtained by adding paths $P_{1}$ and $P_{2}$ to it. Note
that the extension $G_{i}^{\prime}$ is also a unicyclic graph with $C_{i}$
being its only cycle. Also, note that the distances in $G_{i}^{\prime}$ and
$G$ are the same, i.e. $G_{i}^{\prime}$ is a isometric subgraph of $G$, due to
the way in which we chose vertices $u,$ $v$ and $w.$

Let $z$ be the common vertex of paths $P_{1}$ and $P_{2}$ furthest from
$v_{j}$ in $G_{i}^{\prime}$. The construction of $G_{i}^{\prime}$ implies
$z=v$ or $z=w$. Note that the addition of paths $P_{1}$ and $P_{2}$ to $G_{i}$
in order to obtain $G_{i}^{\prime}$ certainly creates two threads hanging at
$z$ in $G_{i}^{\prime}$ which do not contain a vertex from $S_{i}.$ Therefore,
the set $S_{i}$ is not a branch-resolving set in $G_{i}^{\prime}$, so it is
not a good candidate for metric generator. Hence, just as we modified the
domain $G_{i}$ into $G_{i}^{\prime}$, now the set $S_{i}$ also has to be
modified into $S_{i}^{\prime}.$ We define $S_{i}^{\prime}=S_{i}\backslash
\{v_{j}\}\cup\{s^{\prime}\}$, i.e. we replace the boundary vertex $v_{j}$ with
$s^{\prime}$ in the set $S_{i}$ to obtain $S_{i}^{\prime}$. Note that the set
$S_{i}^{\prime}$ defined in this way certainly is a branch-resolving set in
$G_{i}^{\prime}$, because exchanging the vertex $v_{j}$ with $s^{\prime}$ in
$S_{i}$ resolves the problem of two threads hanging at $z$ in $G_{i}^{\prime}%
$. Also, note that there must exist a geodesic triple of $S_{i}^{\prime}%
$-active vertices on $C_{i}$ in $G_{i}^{\prime}$. Thus, according to Lemma
\ref{Prop_gt}, so $S_{i}^{\prime}$ distinguishes $x$ and $x^{\prime}$ in
$G_{i}^{\prime}$. Since $G_{i}^{\prime}$ is isometric subgraph of $G$, this
further implies $S$ distinguishes $x$ and $x^{\prime}$ in $G$ and the case is proven.
\end{proof}

The above two lemmas give us promptly the following result.

\begin{corollary}
\label{c.2} Let $G$ be a cactus graph with $c$ cycles. Then $\left\vert
\dim(G)-\operatorname*{edim}(G)\right\vert \leq c$.
\end{corollary}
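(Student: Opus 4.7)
The plan is to observe that the corollary is an immediate consequence of the two preceding lemmas sandwiching $\dim(G)$ and $\mathrm{edim}(G)$ between the same lower and upper bound, whose difference is at most $c$.

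First, I would denote
\[
\Lambda(G) \;=\; L(G) + \sum_{i=1}^{c}\max\{2-b(C_i),0\}.
\]
The first lemma in this section gives $\dim(G) \geq \Lambda(G)$ and $\mathrm{edim}(G) \geq \Lambda(G)$, while the second lemma gives $\dim(G) \leq \Lambda(G) + c$ and $\mathrm{edim}(G) \leq \Lambda(G) + c$. In particular, both invariants lie in the integer interval $[\Lambda(G),\Lambda(G)+c]$.

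From this it follows at once that any two numbers in this interval differ by at most $c$, and so
\[
|\dim(G) - \mathrm{edim}(G)| \;\leq\; (\Lambda(G)+c) - \Lambda(G) \;=\; c,
\]
which is precisely the claim.

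There is essentially no obstacle here, since the real work has already been carried out in the two lemmas; the only thing to be careful about is to state explicitly that the bounds from both lemmas apply uniformly to $\dim(G)$ and $\mathrm{edim}(G)$, so that both are trapped in the same interval of length $c$.
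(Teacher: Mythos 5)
Your proposal is correct and matches the paper exactly: the paper derives the corollary ``promptly'' from the two lemmas, which trap both $\dim(G)$ and $\mathrm{edim}(G)$ in the interval $[\Lambda(G),\Lambda(G)+c]$, just as you argue. Nothing further is needed.
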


The bound $c$ in the above Corollary \ref{c.2} is the best possible as the
graphs for which the both sides of the bound are achieved are presented in
\cite{Knor}.

\section{Further work}

In the paper, we have established that the vertex and edge metric dimensions
of a unicyclic graph differ by at most $1$, more precisely each of them has
its value in
\[
L(G)+\max(2-b(G),0)\quad\hbox{ and/or }\quad L(G)+\max(2-b(G),0)+1.
\]
A possible further work is to research (maybe characterize) when each of these
two values is realized for each of $\dim(G)$ and $\mathrm{edim}(G)$. Our
ongoing investigation shows, beside maybe other things, that it depends also
of the parity of $C$. Let us mention that this will fully resolve when the
difference of $\dim(G)-\mathrm{edim}(G)$ is $-1$, $0$ and $1$.

\begin{problem}
For a unicyclic graph $G$, determine when the difference $\dim(G) -
\mathrm{edim}(G)$ is $-1$, $0$ and $1$.
\end{problem}

One can propose even a more general problem as follows.

\begin{problem}
For a cactus graph $G$ with $c$ cycles, determine when the difference $\dim(G)
- \mathrm{edim}(G)$ is $-c,-c+1,\ldots,-1, 0, 1, 2,\ldots,c-1,c$.
\end{problem}

Another direction is to consider graphs with higher cyclomatic number, which
is defined as $c(G)=|E(G)|-|V(G)|+1$, where $n=|V(G)|$ and $m=|E(G)|$.
Regarding trees have cyclomatic number 0, unicyclic graphs have 1, and other
graphs have $\ge2$. Note that for trees we have $\left\vert \dim
(G)-\operatorname*{edim}(G)\right\vert =0$, unless $G$ is $K_{2}$. Similarly
holds for unicyclic graphs by Corollary~\ref{c.1}. So maybe this relation can
be generalized to graphs with higher cyclomatic number, in particular we
believe the following holds.

\begin{conjecture}
Let $G\ne K_{2}$ be a graph with cyclomatic number $c$. Then
\[
\left\vert \dim(G)-\operatorname*{edim}(G)\right\vert \leq c.
\]

\end{conjecture}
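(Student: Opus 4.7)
The natural approach is induction on the cyclomatic number $c$. The base cases $c=0$ (trees) and $c=1$ (unicyclic, Corollary~\ref{c.1}) are settled, and Corollary~\ref{c.2} already handles all cactus graphs regardless of $c$. Thus only graphs containing at least two edge-sharing cycles remain to be analyzed.

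For the inductive step, given a non-cactus $G$ with cyclomatic number $c\geq 2$, I would pick an edge $e=uv$ lying on two distinct cycles of $G$; then $G-e$ is connected with cyclomatic number $c-1$, and by induction $|\dim(G-e)-\edim(G-e)|\leq c-1$. The desired bound follows once we establish
\[
|\dim(G)-\edim(G)|\leq|\dim(G-e)-\edim(G-e)|+1.
\]
I would attempt this by taking a minimum vertex (resp.\ edge) metric generator $S$ of $G-e$ and showing that $S\cup\{u\}$ is a vertex (resp.\ edge) metric generator of $G$. The guiding intuition is that any pair $(x,x')$ distinguished in $G-e$ but not in $G$ must satisfy $d_G(x,x')<d_{G-e}(x,x')$, so every shortest $G$-path between $x$ and $x'$ passes through $e$. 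This localizes every new failure to a thin neighborhood of $e$ and makes it plausible that one endpoint of $e$ repairs all such failures simultaneously.

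The main obstacle is that this localization is delicate: when $S\cup\{u\}$ is tested in $G$, the distances from vertices of $S$ themselves may have decreased via paths through $e$, so previously distinguishing vertices may cease to distinguish, and the bookkeeping must handle vertex pairs, edge pairs, and pairs with one edge on $e$ in a uniform way. Should direct induction prove unwieldy, a complementary strategy is an ear-decomposition construction: fix a spanning tree $T$ of $G$ with non-tree edges $e_{1},\ldots,e_{c}$, start from a minimum metric generator of $T$ (which is automatically branch-resolving), and for each fundamental cycle created by $e_{i}$ add at most one carefully chosen vertex to patch any failures, mimicking the geodesic-triple trick of Lemma~\ref{Prop_gt}. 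The delicate point here is that fundamental cycles may overlap heavily, so the "one vertex per cycle" budget has to be defended against cumulative interference; verifying that a single vertex per ear suffices, even in the presence of such interference, is the real technical heart of the conjecture.
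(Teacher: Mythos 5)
This statement is posed in the paper as an open conjecture: the authors prove it only for cactus graphs (Corollary~\ref{c.2}) and observe it trivially for $c\ge n-1$, so there is no proof of record to compare against, and your proposal must stand on its own as a proof. It does not. The central inductive step, $\left\vert \dim(G)-\edim(G)\right\vert \leq\left\vert \dim(G-e)-\edim(G-e)\right\vert +1$, is asserted but not established, and the method you propose for it cannot deliver it even if it worked: showing that $S\cup\{u\}$ is a generator of $G$ whenever $S$ is a minimum generator of $G-e$ only yields the upper bounds $\dim(G)\leq\dim(G-e)+1$ and $\edim(G)\leq\edim(G-e)+1$. Upper bounds alone do not control the difference $\left\vert \dim(G)-\edim(G)\right\vert$; for that you would also need lower bounds of the form $\dim(G)\geq\dim(G-e)$ (or a comparable drop estimate), and no such bound is true in general --- it is known that deleting or reinserting a single edge can change the metric dimension by an arbitrarily large amount (Eroh, Feit, Kang and Yi, \emph{The effect of vertex or edge deletion on the metric dimension of graphs}), precisely because an added chord reroutes shortest paths globally, not just in a ``thin neighborhood of $e$.'' For the same reason the claim that one endpoint of $e$ repairs all newly coincident pairs is not plausible as stated.

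The ear-decomposition alternative suffers from the analogous defect: even granting that one added vertex per fundamental cycle produces a generator (which you acknowledge is unverified and is where the overlapping-cycle interference lives), this again only bounds $\dim(G)$ and $\edim(G)$ from above by $L(T)+c+O(1)$ and says nothing about how far apart the two minima can be. The paper's own cactus argument succeeds exactly because it sandwiches \emph{both} invariants between the same two explicit quantities, $L(G)+\sum_{i}\max\{2-b(C_{i}),0\}$ and that value plus $c$, via a matching lower-bound lemma; any successful attack on the conjecture would need an analogue of that lower bound for graphs with overlapping cycles, which is absent from your sketch. As it stands, the proposal identifies the difficulty but does not resolve it, so the conjecture remains open.
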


Note that the above conjecture holds for more dense graphs, in particular for
graphs with cyclomatic number $c\ge n-1$ as $n-1\ge\left\vert \dim
(G)-\operatorname*{edim}(G)\right\vert $ for every $G$. Also note that if the
bound of the conjecture holds, then it is tight, see~\cite{Knor} for
infinitely many graphs with $c$ vertex-disjoint cycles each. Moreover there
are presented graphs with $\dim(G)-\mathrm{edim}(G)=c$ as well graphs with
$\mathrm{edim}(G)-\dim(G)=c$.

\bigskip\noindent\textbf{Acknowledgements.}~~The authors acknowledge partial
support Slovenian research agency ARRS program \ P1--0383 and ARRS project
J1-1692 and also Project KK.01.1.1.02.0027, a project co-financed by the
Croatian Government and the European Union through the European Regional
Development Fund - the Competitiveness and Cohesion Operational Programme.

\end{document}